\newtheorem{theorem}{Theorem}[section]
\newtheorem{corollary}{Corollary}[theorem]
\begin{document}

\title[\tiny Multiple zeta stars on 3-2-1 indices]{\small Multiple zeta star values on 3-2-1 indices}

\author[\tiny Kh.~Hessami Pilehrood]{Kh.~Hessami Pilehrood}
\address{The Fields Institute for Research in Mathematical Sciences, 222 College St, Toronto, Ontario M5T 3J1 Canada}
\email{hessamik@gmail.com}

\author{T.~Hessami Pilehrood}
\address{The Fields Institute for Research in Mathematical Sciences, 222 College St, Toronto, Ontario M5T 3J1 Canada}
\email{hessamit@gmail.com}

\subjclass[2010]{11M32, 11M35, 33C20, 33B15, 30E20, 30D30}
\keywords{Multiple zeta star value, multiple zeta value, generating function, sum formula.}

\begin{abstract}
In 2008, Muneta found explicit evaluation of the multiple zeta star value $\zeta^\star(\{3, 1\}^d)$, and in 2013, Yamamoto proved
a sum formula for multiple zeta star values on 3-2-1 indices. In~this paper, we provide another way of deriving the formulas mentioned above.
It is based on our previous work on generating functions for multiple zeta star values and also on constructions of generating functions for restricted sums
of alternating Euler sums. As a result, the formulas obtained are simpler and computationally more effective than the known ones.
Moreover, we give explicit evaluations of 
$\zeta^\star(\{\{2\}^m, 3, \{2\}^m, 1\bigr\}^d)$ and  $\zeta^\star(\{\{2\}^m, 3, \{2\}^m, 1\}^d, \{2\}^{m+1})$
in two ways. The first is based on computation of product of generating functions, while the second uses properties of Bell polynomials.
\end{abstract}

\maketitle

\section{Introduction.}

The multiple zeta value (MZV) and the multiple zeta star value (MZSV) are defined by
\begin{align}
\zeta(s_1, \ldots, s_r) &=\sum_{k_1> \cdots  > k_r\ge 1}\frac{1}{k_1^{s_1}\cdots k_r^{s_r}},  \label{z}\\[3pt]
 \zeta^\star(s_1, \ldots, s_r) &=\sum_{k_1\ge \cdots  \ge k_r\ge 1}\frac{1}{k_1^{s_1}\cdots k_r^{s_r}} \label{zstar}
\end{align}
for any multi-index ${\bf s}=(s_1, \ldots, s_r)\in{\mathbb N}^r$  with $s_1>1$. By convention, the number
$|{\bf s}| := s_1+\cdots+s_r$ is called the weight, and $r$ the depth (or length) of the multiple zeta (star) value.
The two types of zeta values are expressible in terms of each other via the relations
\begin{equation*}
\zeta^\star({\bf s})=\sum_{\bf p}\zeta({\bf p}) \qquad\text{and}\qquad \zeta({\bf s})=\sum_{\bf p}(-1)^{\sigma({\bf p})}\zeta^\star({\bf p}),
\end{equation*}
where ${\bf p}$ runs through all the indices  of the form $(s_1\circ s_2\circ\cdots \circ s_r)$ with ``$\circ$'' being either the symbol ``,'' or
the sign ``$+$'', and the exponent $\sigma({\bf p})$ is the number of signs ``$+$'' in ${\bf p}$. This implies that the two ${\mathbb Q}$-vector spaces
spanned by the MZVs and by the MZSVs  coincide. 

Special values of (\ref{z}) and (\ref{zstar})  have recently been the subject of a lot of experimentation and several conjectures, many of which were suggested
by numerical calculations. 
The simplest precise evaluations of multiple zeta values are given by
\begin{equation} \label{in_0}
\zeta(\{2\}^d)=\frac{\pi^{2d}}{(2d+1)!} \qquad\text{and}\qquad \zeta^\star(\{2\}^d)=(-1)^{d+1}(2^{2d}-2)\frac{B_{2d}}{(2d)!}\,\pi^{2d},
\end{equation}
where $B_{2d}$ is a  classical Bernoulli number. Here by $\{a_1, \ldots, a_k\}^d$ we mean $d$ successive repetitions of the sequence $\{a_1, \ldots, a_k\}$.
The formulas easily follow from the infinite product of the sine function and Laurent series expansions of  functions $\sin(\pi z)/(\pi z)$ and $\pi z/\sin(\pi z)$,
respectively. 
Formulas (\ref{in_0}) were further extended  by many authors (see, for example, \cite{BBB:1997, Chen:2017JNT, Mu:2008}) to
\begin{equation} \label{in_1}
\zeta(\{2m\}^d)=C_{m,d}\pi^{2md} \qquad\text{and}\qquad \zeta^\star(\{2m\}^d)=C_{m,d}^{\star}\pi^{2md},
\end{equation}
with explicitly given rational constants $C_{m,d}$ and $C_{m,d}^{\star}$.

Another example of arbitrary depth evaluation includes
\begin{equation} \label{in_2}
\zeta(\{3,1\}^d)=\frac{2\pi^{4d}}{(4d+2)!},
\end{equation}
which was conjectured by Zagier \cite{Za:1994} and first proved by Borwein et al.\ \cite{BBB:2001} by using generating functions, namely, by showing that
$$
\sum_{d=0}^{\infty}\zeta(\{3,1\}^d)z^{4d}=\frac{\sin(\frac{1}{2}(1+i)\pi z)}{\frac{1}{2}(1+i)\pi z}\cdot \frac{\sin(\frac{1}{2}(1-i)\pi z)}{\frac{1}{2}(1-i)\pi z}.
$$
Later, a purely combinatorial proof was given in \cite{BBB:1998} based on shuffle properties of iterated integrals.
A multiple zeta star version of formula (\ref{in_2}) was proved by Muneta \cite{Mu:2008}
\begin{equation} \label{in_3}
\zeta^\star(\{3,1\}^d)=\pi^{4d}\sum_{j=0}^d\frac{2}{(4j+2)!}\underset{n_0, n_1\ge 0}{\sum_{n_0+n_1=2(d-j)}}(-1)^{n_1}
\frac{(2^{2n_0}-2)B_{2n_0}}{(2n_0)!}\frac{(2^{2n_1}-2)B_{2n_1}}{(2n_1)!}
\end{equation}
with the help of the identity
\begin{equation} \label{in_4}
\zeta^\star(\{3,1\}^d)=\sum_{j=0}^d\zeta(\{3,1\}^j)\zeta^\star(\{4\}^{d-j}),
\end{equation}
which after substitution of known formulas (\ref{in_1}) and (\ref{in_2}) implies (\ref{in_3}). Muneta's proof of (\ref{in_4})
uses algebraic and combinatorial properties of harmonic (stuffle) product. 

A further generalization of (\ref{in_2}) to a sum of multiple zeta values obtained by inserting blocks of twos of constant total length in the argument string $\{3,1\}^d$, 
$$
Z(d, n):=\underset{a_1, \ldots, a_{2d+1}\ge 0}{\sum_{a_1+\cdots+a_{2d+1}=n}}\zeta(\{2\}^{a_1}, 3, \{2\}^{a_2}, 1, \ldots, 3, \{2\}^{a_{2d}}, 1, \{2\}^{a_{2d+1}}),
$$
was
given by Bowman and Bradley \cite{BB:2002}
\begin{equation} \label{in_5}
Z(d, n)=\binom{n+2d}{n}\frac{\pi^{2n+4d}}{(2d+1)(2n+4d+1)!}.
\end{equation}
It's counterpart for multiple zeta star values
$$
Z^\star(d, n):=\underset{a_1, \ldots, a_{2d+1}\ge 0}{\sum_{a_1+\cdots+a_{2d+1}=n}}\zeta^\star(\{2\}^{a_1}, 3, \{2\}^{a_2}, 1, \ldots, 3, \{2\}^{a_{2d}}, 1, \{2\}^{a_{2d+1}}),
$$
was proved by Yamamomto \cite{Ya:2013} by using generating series of truncated multiple zeta sums. In fact, Yamamoto first proved an identity expressing $Z^\star(d,n)$
in terms of $Z(\cdot,\cdot)$,
$$
Z^\star(d,n)=\underset{j+l+v=n}{\sum_{2m+k+u=2d}}(-1)^{j+k}\binom{k+l}{k}\binom{u+v}{u}Z(m,j)\zeta^\star(\{2\}^{k+l})\zeta^\star(\{2\}^{u+v})
$$
and then by applying Bowman-Bradley formula (\ref{in_5}) and formulas (\ref{in_0}), obtained an explicit evaluation for $Z^\star(d,n)$,
\begin{equation} \label{in_6}
Z^\star(d,n)=\pi^{4d+2n}\underset{j+l+v=n}{\sum_{2m+k+u=2d}}(-1)^{j+k}\binom{k+l}{k}\binom{u+v}{u}\binom{2m+j}{j}\frac{\beta_{k+l}\beta_{u+v}}{(2m+1)(4m+2j+1)!},
\end{equation}
where
$$
\beta_r=(2^{2r}-2)\frac{(-1)^{r-1}B_{2r}}{(2r)!}.
$$
In this paper, we provide another way to explicitly evaluate the values of $\zeta^\star(\{3,1\}^d)$ and $Z^\star(d,n)$, which is self-contained and does not use corresponding results
on multiple zeta values. It is based on generating functions for multiple zeta star values from our paper \cite{THP:2018} and generating functions for restricted sums
of alternating Euler sums. There is a lot of work on restricted sum formulas for multiple zeta values \cite{Chen:2017Med, Chen:2017JNT, Hof:2017, Zhao:2015}
that served as a source of inspiration for us. As a result, the formulas obtained are simpler than those in (\ref{in_3}), (\ref{in_6}), and computationally more effective.
\begin{theorem} \label{T4}
For any non-negative integer $d$, we have
\begin{equation} \label{in_7}
\begin{split}
\zeta^\star(\{3,1\}^d) &= 4\pi^{4d}\cdot\sum_{k=0}^{2d}(-1)^k(4^{k+1}-1)\,\frac{B_{2k+2}}{(2k+2)!}\,\frac{B_{4d-2k}}{(4d-2k)!}, \\[2pt]
\zeta^\star(\{3,1\}^d, 2) &= 4\pi^{4d+2}\cdot\sum_{k=0}^{2d+1}(-1)^k(4^{k+1}-1)\,\frac{B_{2k+2}}{(2k+2)!}\,\frac{B_{4d+2-2k}}{(4d+2-2k)!}.
\end{split}
\end{equation}
\end{theorem}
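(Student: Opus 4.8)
The plan is to obtain both evaluations at once, as the coefficients of $z^{4d}$ and $z^{4d+2}$ in a single closed-form generating function. The key identity I would establish is
\begin{equation}\label{eq:gf}
\sum_{d\ge 0}\zeta^\star(\{3,1\}^d)\,z^{4d}+\sum_{d\ge 0}\zeta^\star(\{3,1\}^d,2)\,z^{4d+2}
=\tan\frac{\pi z}{2}\,\coth\frac{\pi z}{2}.
\end{equation}
Then the first (resp.\ second) formula of Theorem~\ref{T4} is simply the coefficient of $z^{4d}$ (resp.\ $z^{4d+2}$) on the left-hand side. This is where the two ingredients advertised in the introduction enter: the generating-function machinery of \cite{THP:2018} for multiple zeta star values, specialized so as to record precisely the values $\zeta^\star(\{3,1\}^d)$ and $\zeta^\star(\{3,1\}^d,2)$, presents the left-hand side of (\ref{eq:gf}) as a product of two power series in $z$, each of which is the generating function of a \emph{restricted sum of alternating Euler sums}; the construction of generating functions for such restricted sums then evaluates the two factors in closed form. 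One factor gathers the $\lambda$-type contributions — this is the source of the coefficients $4^{k+1}-1$ — and sums to $\frac{\pi z}{2}\tan\frac{\pi z}{2}$, while the other gathers the $\zeta$-type (hence Bernoulli-number) contributions and sums to $\frac{2}{\pi z}\coth\frac{\pi z}{2}$; their product is the right-hand side of (\ref{eq:gf}).

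Granting (\ref{eq:gf}), the rest is a short formal manipulation of power series. I would use the expansions
\begin{equation}\label{eq:exp}
\frac{w}{2}\tan\frac{w}{2}=\sum_{k\ge 0}(-1)^k(4^{k+1}-1)\,\frac{B_{2k+2}}{(2k+2)!}\,w^{2k+2},
\qquad
\frac{w}{2}\coth\frac{w}{2}=\sum_{m\ge 0}\frac{B_{2m}}{(2m)!}\,w^{2m},
\end{equation}
the second being the classical generating function of the Bernoulli numbers and the first following from it via the duplication identity $w\coth w-\frac{w}{2}\coth\frac{w}{2}=\frac{w}{2}\tanh\frac{w}{2}$ together with the substitution $w\mapsto iw$. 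Writing the right-hand side of (\ref{eq:gf}) as $\frac{4}{(\pi z)^2}\cdot\frac{\pi z}{2}\tan\frac{\pi z}{2}\cdot\frac{\pi z}{2}\coth\frac{\pi z}{2}$, substituting $w=\pi z$ in (\ref{eq:exp}) and multiplying the two resulting series, one finds that the coefficient of $z^{2N-2}$ in $\tan\frac{\pi z}{2}\coth\frac{\pi z}{2}$ equals
\begin{equation*}
4\,\pi^{2N-2}\sum_{k=0}^{N-1}(-1)^k(4^{k+1}-1)\,\frac{B_{2k+2}}{(2k+2)!}\,\frac{B_{2N-2-2k}}{(2N-2-2k)!}.
\end{equation*}
Taking $N=2d+1$ yields the first formula of Theorem~\ref{T4}, and $N=2d+2$ the second.

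The main obstacle is the generating-function identity (\ref{eq:gf}); everything downstream is routine. For it I would rely on \cite{THP:2018} to produce the product decomposition of its left-hand side, and then prove directly — using the partial-fraction expansions of $\tan$ and $\coth$ (equivalently, a comparison with the product formulas for the sine and hyperbolic sine) — that the two restricted alternating Euler sums have the trigonometric generating functions claimed above; the delicate point there is keeping track of the parity and sign conventions, so that the contributions assemble into $\tan\frac{\pi z}{2}$ rather than into $\tanh\frac{\pi z}{2}$ or $\cot\frac{\pi z}{2}$. As a consistency check one can verify (\ref{eq:gf}) at low order: its constant term is $\zeta^\star(\{3,1\}^0)=1$, its $z^2$-coefficient is $\zeta^\star(2)=\pi^2/6$, and its $z^4$-coefficient is $\zeta^\star(3,1)=\zeta(3,1)+\zeta(4)=\pi^4/72$, in agreement with $\tan\frac{\pi z}{2}\coth\frac{\pi z}{2}=1+\frac{\pi^2}{6}z^2+\frac{\pi^4}{72}z^4+\cdots$.
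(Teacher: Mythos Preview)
Your approach is essentially the paper's: your generating-function identity is exactly Theorem~\ref{T3} after the substitution $z\mapsto iz$ (which turns $\tanh(\pi z/2)\cot(\pi z/2)$ and the minus sign on the $z^{4d+2}$ terms into your $\tan(\pi z/2)\coth(\pi z/2)$ with a plus sign), and the coefficient extraction via the Bernoulli expansions of $\tan$ and $\coth$ mirrors the paper's computation in Section~\ref{S2}. One small inaccuracy in your sketch of how (\ref{eq:gf}) is proved: the factorization that actually emerges from \cite{THP:2018} together with Theorem~\ref{T1} is $\prod_k\bigl(1+(-1)^kz^2/k^2\bigr)\cdot\prod_k\bigl(1-(-1)^kz^2/k^2\bigr)^{-1}$, and these two infinite products evaluate (via the sine product) to $\tfrac{2}{\pi z}\sinh(\pi z/2)\cos(\pi z/2)$ and $\tfrac{\pi z}{2}\bigl(\sin(\pi z/2)\cosh(\pi z/2)\bigr)^{-1}$, not to $\tfrac{\pi z}{2}\tan\tfrac{\pi z}{2}$ and $\tfrac{2}{\pi z}\coth\tfrac{\pi z}{2}$ separately---but this is harmless, since only their product matters.
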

\noindent For example, if Bernoulli numbers are tabulated (given), the computational complexity of formula (\ref{in_3}) is $O(d^2)$, while for (\ref{in_7}),
it is linear, $O(d)$. Fomulas (\ref{in_7}) follow from the generating function
\begin{equation*}
\sum_{d=0}^{\infty}\zeta^{\star}(\{3,1\}^d)z^{4d}-\sum_{d=0}^{\infty}\zeta^{\star}(\{3,1\}^d,2)z^{4d+2}=  \tanh(\pi z/2)\cdot\cot(\pi z/2)
\end{equation*}
that will be proved in Section \ref{S2} (see Theorem \ref{T3}). 

We also provide explicit evaluation of the sum $Z^\star(d, n)$ and its two sub-sums
depending on whether $a_{2d+1}$ is zero or not.
Let
\begin{equation} \label{z0}
Z^\star_0(d,n)=\underset{a_1, \ldots, a_{2d}\ge 0}{\sum_{a_1+\cdots+a_{2d}=n}}\zeta^\star(\{2\}^{a_1}, 3, \{2\}^{a_2}, 1, \ldots, 3, \{2\}^{a_{2d}}, 1)
\end{equation}
and 
\begin{equation} \label{z1}
Z^\star_1(d,n)=\underset{a_1, \ldots, a_{2d+1}\ge 0}{\sum_{a_1+\cdots+a_{2d+1}=n-1}}\zeta^\star(\{2\}^{a_1}, 3, \{2\}^{a_2}, 1, \ldots, 3, \{2\}^{a_{2d}}, 1, \{2\}^{a_{2d+1}+1}),
\end{equation}
then 
\begin{equation} \label{eq08}
Z^\star(d,n)=Z^\star_0(d,n)+Z^\star_1(d,n).
\end{equation}
\begin{theorem} \label{T11}
For any positive integer $d$ and a non-negative integer $n$, we have
\begin{align*}
Z^\star_0(d,n)&=4\pi^{2n+4d}\sum_{k=0}^{n+2d}\sum_{r=0}^n(-1)^{n+k+r}(4^{k+1}-1)
\binom{k}{r}\binom{n+2d-k}{n-r}\frac{B_{2k+2}}{(2k+2)!}\frac{B_{2n+4d-2k}}{(2n+4d-2k)!}, \\
%Z_1(d,n)&=4\pi^{2n+4d}\sum_{k=0}^{n+2d}\,\sum_{r=0}^{n-1}(-1)^{n+k+r+1}(4^{k+1}-1)\binom{k}{r}\binom{n+2d-k}{n-1-r}\frac{B_{2k+2}}{(2k+2)!}\frac{B_{2n+4d-2k}}{(2n+4d-2k)!}, \\
Z^\star(d,n) &=4\pi^{2n+4d}\sum_{k=0}^{n+2d}\sum_{r=0}^n (-1)^{n+k+r}(4^{k+1}-1)\!\binom{k+1}{r}\!\binom{n+2d-k}{n-r}\frac{B_{2k+2}}{(2k+2)!}\frac{B_{2n+4d-2k}}{(2n+4d-2k)!},
\end{align*}
and for $n\ge 1$,
$$
Z^\star_1(d,n)=4\pi^{2n+4d}\sum_{k=0}^{n+2d}\,\sum_{r=0}^{n-1}(-1)^{n+k+r+1}(4^{k+1}-1)\binom{k}{r}\binom{n+2d-k}{n-1-r}\frac{B_{2k+2}}{(2k+2)!}\frac{B_{2n+4d-2k}}{(2n+4d-2k)!}.
$$
\end{theorem}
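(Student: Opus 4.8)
The plan is to package all three sums into generating functions in an auxiliary variable and reduce everything to the generating-function identity already announced in Theorem~\ref{T3}, namely
\[
\sum_{d=0}^{\infty}\zeta^{\star}(\{3,1\}^d)z^{4d}-\sum_{d=0}^{\infty}\zeta^{\star}(\{3,1\}^d,2)z^{4d+2}=\tanh(\pi z/2)\cot(\pi z/2),
\]
together with its ``inserted twos'' refinement. Concretely, I would introduce a second variable $w$ tracking the total number $n$ of inserted $2$'s and form
\[
\mathcal{Z}^{\star}(z,w)=\sum_{d\ge 1}\sum_{n\ge 0}Z^{\star}(d,n)\,z^{4d}w^{2n},
\]
and similarly $\mathcal{Z}^{\star}_0,\mathcal{Z}^{\star}_1$. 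The first step is to show, using the generating functions for truncated multiple zeta star sums from~\cite{THP:2018}, that inserting an arbitrary block $\{2\}^{a}$ at a fixed slot multiplies the corresponding truncated generating series by the generating function of $\zeta^{\star}(\{2\}^{a})$ evaluated at a shifted argument; summing over all $a_i$ with fixed total length then produces a clean closed form for $\mathcal{Z}^{\star}(z,w)$, $\mathcal{Z}^{\star}_0(z,w)$, $\mathcal{Z}^{\star}_1(z,w)$ in terms of $\tanh$, $\cot$, and the sine-product generating function $\pi z/\sin(\pi z)$ from~\eqref{in_0}. The decomposition~\eqref{eq08} is automatic at the level of generating functions, so it suffices to treat two of the three.

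The second step is the extraction of coefficients. Having written each generating function as a product of a factor of the shape $\tanh(\pi z/2)\cot(\pi z/2)$ (the $n=0$ part, governed by Theorem~\ref{T3}) with correction factors coming from the inserted twos, I would expand each factor as a power series whose coefficients are the rescaled Bernoulli numbers $B_{2k+2}/(2k+2)!$ and $B_{2m}/(2m)!$, using the standard expansions
\[
\pi z\cot(\pi z)=-\sum_{k\ge 0}\frac{(2\pi z)^{2k}}{(2k)!}B_{2k},\qquad
\tanh(\pi z/2)=\sum_{k\ge 0}\frac{(4^{k+1}-1)}{(2k+2)!}\,2\,B_{2k+2}\,(\pi z)^{2k+1}
\]
(the factor $4^{k+1}-1$ being exactly the combination $2^{2k+2}-2$ up to the known constant, which is why it appears in~\eqref{in_7}). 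Multiplying three such series and collecting the coefficient of $z^{4d}w^{2n}$ is a Cauchy-product bookkeeping exercise; the two binomial coefficients $\binom{k}{r}$ (or $\binom{k+1}{r}$) and $\binom{n+2d-k}{n-r}$ will emerge from expanding the ``inserted twos'' factor, which contributes a term of the form $(1-w^2\,z^{-?}\cdots)^{-1}$ and hence binomial coefficients via the binomial series, while the sign $(-1)^{n+k+r}$ tracks the alternation in the $\cot$ versus $\tanh$ expansions. For $Z^{\star}_1$ the shift $n\mapsto n-1$ in~\eqref{z1} accounts for the reduced upper limit $r\le n-1$ and the extra overall sign.

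The third step is a consistency check: setting $w=0$ must recover Theorem~\ref{T4} (the $Z^{\star}_0(d,0)=\zeta^{\star}(\{3,1\}^d)$ case), and the identity $Z^{\star}=Z^{\star}_0+Z^{\star}_1$ must hold term by term, which amounts to the Pascal relation $\binom{k}{r}+\binom{k}{r-1}=\binom{k+1}{r}$ applied inside the double sum — a reassuring internal check that the extracted formulas are correctly normalized. The main obstacle, I expect, is the first step: proving rigorously that the generating function $\mathcal{Z}^{\star}(z,w)$ factors through $\tanh(\pi z/2)\cot(\pi z/2)$ times an explicit elementary function, i.e.\ correctly tracking how an inserted block $\{2\}^{a_i}$ at position $i$ interacts with the iterated-sum structure underlying $\zeta^{\star}(\{3,1\}^d)$. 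This requires going back to the truncated generating series of~\cite{THP:2018} and verifying that the insertion operation corresponds to multiplication by a single-variable generating function with an appropriately shifted summation index, rather than something that entangles the $3$'s and $1$'s; once that structural lemma is in place, steps two and three are routine power-series manipulations.
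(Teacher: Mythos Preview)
Your overall architecture --- build a two-variable generating function, evaluate it in closed form via the results of~\cite{THP:2018}, then extract coefficients --- is exactly what the paper does (Theorems~\ref{T8}--\ref{T10} followed by the coefficient extraction in the proof of Theorem~\ref{T11}), and your final Pascal-relation check for $Z^\star=Z^\star_0+Z^\star_1$ is precisely how the paper derives the middle formula. But two of your structural expectations are wrong and would derail the argument if pursued literally.

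First, the two-variable generating function does \emph{not} factor as $\tanh(\pi z/2)\cot(\pi z/2)$ times a ``correction factor'' in the twos-variable. What actually happens (Theorem~\ref{T10}) is that the trigonometric functions themselves get evaluated at shifted arguments:
\[
\sum_{d,n} Z^\star_0(d,n)z^{2n}w^{4d}-\sum_{d,n\ge 1} Z^\star_1(d,n)z^{2n-2}w^{4d+2}
=\frac{\sqrt{z^2+w^2}}{\sqrt{z^2-w^2}}\,\tan\!\Bigl(\tfrac{\pi}{2}\sqrt{z^2-w^2}\Bigr)\cot\!\Bigl(\tfrac{\pi}{2}\sqrt{z^2+w^2}\Bigr).
\]
Setting the twos-variable $z=0$ recovers $\tanh(\pi w/2)\cot(\pi w/2)$, but for general $z$ there is no product decomposition of the kind you describe. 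Consequently the binomial coefficients $\binom{k}{r}$ and $\binom{n+2d-k}{n-r}$ do not come from a geometric-series factor $(1-w^2 z^{-?})^{-1}$: they come from the binomial expansion of $(z^2-w^2)^k$ inside the $\tan$ series and of $(z^2+w^2)^m$ inside the $\cot$ series.

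Second, your ``main obstacle'' --- proving that inserting a block $\{2\}^{a_i}$ corresponds to multiplication by an independent single-variable generating function --- is not how \cite{THP:2018} enters, and in fact that multiplicativity statement is false (the nested inequalities $k_1\ge\cdots\ge k_{2d}$ prevent such a factorization). What Theorems~1.3 and~1.5 of \cite{THP:2018} actually give is a representation with free variables $z_1,\ldots,z_{2d}$ built into the denominators $k_j^2-z_j^2$; specializing $z_1=\cdots=z_{2d}=z$ already \emph{is} the generating function in the twos-variable, with no separate insertion lemma needed. Regrouping in powers of $2$ then yields the deformed alternating sums $A_z(2,2d,r)$ (Theorem~\ref{T8}), whose generating function in $w$ is the infinite product that evaluates to the $\tan/\cot$ expression above (Theorems~\ref{T9}--\ref{T10}).
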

\noindent Moreover, we find explicit formulas for
$$
\zeta^\star(\{\{2\}^m, 3, \{2\}^m, 1\bigr\}^d)\qquad\text{and}\qquad  \zeta^\star(\{\{2\}^m, 3, \{2\}^m, 1\}^d, \{2\}^{m+1}),
$$ which seem to be new and
have not appeared in the literature before. We prove that these numbers are rational multiples of $\pi^{4(m+1)d}$ and $\pi^{2(m+1)(2d+1)}$, respectively, and give explicit evaluations
of the rational factors in two ways. The first is based on computation of product of generating functions, while the second uses properties of Bell polynomials. 
\begin{theorem} \label{T7}
For any positive integer $d$ and a non-negative integer $m$,
\begin{equation*}
\begin{split}
\zeta^{\star}&(\left\{\{2\}^{m}, 3, \{2\}^{m}, 1\right\}^d)
= 4^{m+1}\pi^{4d(m+1)} \\[3pt]
&\times\underset{n_0, \ldots, n_{m}\ge 0}{\sum_{n_0+\cdots+n_{m}=2(m+1)d}}
\exp\left(\frac{2\pi i}{m+1}\sum_{k=0}^{m}kn_k\right)\prod_{k=0}^{m}\left(\sum_{l_k=0}^{n_k}(4^{l_k+1}-1)\frac{B_{2l_k+2}}{(2l_k+2)!}\frac{B_{2n_k-2l_k}}{(2n_k-2l_k)!}
\,e^{\frac{\pi il_k}{m+1}}\right)
\end{split}
\end{equation*}
and
\begin{equation*}
\begin{split}
\zeta^{\star}&(\left\{\{2\}^{m}, 3, \{2\}^{m}, 1\right\}^d, \{2\}^{m+1})
= (-1)^m4^{m+1}\pi^{(4d+2)(m+1)} \\[3pt]
&\!\!\!\!\!\times\!\underset{n_0, \ldots, n_{m}\ge 0}{\sum_{n_0+\cdots+n_{m}=(m+1)(2d+1)}}
\!\!\exp\left(\frac{2\pi i}{m+1}\sum_{k=0}^{m}kn_k\right)\prod_{k=0}^{m}\left(\sum_{l_k=0}^{n_k}(4^{l_k+1}-1)\frac{B_{2l_k+2}}{(2l_k+2)!}\frac{B_{2n_k-2l_k}}{(2n_k-2l_k)!}
\,e^{\frac{\pi il_k}{m+1}}\right)\!.
\end{split}
\end{equation*}
\end{theorem}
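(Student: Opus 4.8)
The plan is to deduce both evaluations in Theorem~\ref{T7} from a single generating-function identity that factors the bilateral generating series of the two families of star values into a product of $m+1$ rescaled copies of one ``base'' series. Fix $m\ge 0$, put $\mu=e^{\pi i/(m+1)}$ and $\mu^{1/2}=e^{\pi i/(2(m+1))}$, and introduce
\[
P(w)=\frac{1}{\mu^{1/2}}\,\tanh\!\Big(\tfrac{\pi}{2}\mu^{1/2}w\Big)\,\coth\!\Big(\tfrac{\pi}{2}w\Big).
\]
Multiplying the classical expansions $\sum_{l\ge 0}\frac{B_{2l}}{(2l)!}t^{2l}=\frac{t}{2}\coth\frac{t}{2}$ and $\sum_{l\ge 0}(4^{l+1}-1)\frac{B_{2l+2}}{(2l+2)!}t^{2l}=\frac{\tanh(t/2)}{2t}$ gives
\[
P(w)=\sum_{n\ge 0}\Bigl(4\pi^{2n}\sum_{l=0}^{n}(4^{l+1}-1)\,\frac{B_{2l+2}}{(2l+2)!}\,\frac{B_{2n-2l}}{(2n-2l)!}\,e^{\frac{\pi i l}{m+1}}\Bigr)w^{2n},
\]
so the inner Bernoulli products occurring in Theorem~\ref{T7} are exactly the Taylor coefficients of $P$; for $m=0$ one has $P(w)=\tan(\pi w/2)\coth(\pi w/2)=g(iw)$, where $g(z)=\tanh(\pi z/2)\cot(\pi z/2)$ is the function of Theorem~\ref{T3}.

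The heart of the argument is the identity
\[
\sum_{d\ge 0}\zeta^{\star}(\{\{2\}^{m},3,\{2\}^{m},1\}^d)\,w^{4(m+1)d}+(-1)^{m}\!\sum_{d\ge 0}\zeta^{\star}(\{\{2\}^{m},3,\{2\}^{m},1\}^d,\{2\}^{m+1})\,w^{(4d+2)(m+1)}=\prod_{j=0}^{m}P(\mu^{j}w).
\]
To prove it one combines two ingredients. First, the generating-function technology of Section~\ref{S2} --- truncated-sum manipulations together with the generating functions for restricted alternating Euler sums --- which, as for Theorems~\ref{T3} and~\ref{T11}, produces closed forms of $\tanh$/$\cot$ type for generating series of MZSVs on $3$-$2$-$1$ indices with blocks of $2$'s, the slot-count being tracked by an auxiliary variable. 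Second, a root-of-unity section: prescribing exactly $m$ copies of $2$ in each of the $2d$ slots of $\{3,1\}^d$ is the $(m+1)$-fold section, in that auxiliary variable, of the series in which each slot receives arbitrarily many $2$'s; since a block $\{2\}^{a}$ contributes a factor geometric in that variable (a power of a logarithm in the iterated-sum picture, a binomial sum in the harmonic-product picture), evaluating at the common slot-variable $\mu^{j}w$ and summing $j$ over $0,\dots,m$ isolates the uniform term and recombines the geometric factors into $\prod_{j}P(\mu^{j}w)$. The two families --- no trailing block versus a trailing $\{2\}^{m+1}$ --- supply the even, resp.\ odd, powers of $w^{2(m+1)}$ on the left, and the sign $(-1)^{m}$ enters through the $m+1$ branch factors $\mu^{-1/2}$, whose product is $\mu^{-(m+1)/2}=e^{-\pi i/2}$. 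Proving this factorisation cleanly is the main obstacle: one must track how the nested-sum structure interacts with $w\mapsto\mu^{j}w$ and verify that the partial geometric sums telescope into copies of $P$. (Independently, the infinite-product formulas for $\sinh,\cosh$ together with $\prod_{\eta^{m+1}=-1}(c+\eta)=c^{m+1}-(-1)^{m}$ already show that $\prod_{j}P(\mu^{j}w)$ is a power series in $w^{2(m+1)}$, matching the left-hand side.)

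Granting the identity, Theorem~\ref{T7} follows by coefficient comparison. With $P(\mu^{j}w)=\sum_{n_j\ge 0}c_{n_j}\mu^{2jn_j}w^{2n_j}$ and $c_{n}$ the coefficient above,
\[
\prod_{j=0}^{m}P(\mu^{j}w)=\sum_{n_0,\dots,n_m\ge 0}\exp\!\Big(\frac{2\pi i}{m+1}\sum_{k=0}^{m}kn_k\Big)\Bigl(\prod_{k=0}^{m}c_{n_k}\Bigr)w^{2(n_0+\cdots+n_m)},
\]
and on the locus $n_0+\cdots+n_m=N$ one has $\prod_{k}c_{n_k}=4^{m+1}\pi^{2N}\prod_{k=0}^{m}\bigl(\sum_{l_k=0}^{n_k}(4^{l_k+1}-1)\tfrac{B_{2l_k+2}}{(2l_k+2)!}\tfrac{B_{2n_k-2l_k}}{(2n_k-2l_k)!}e^{\pi i l_k/(m+1)}\bigr)$. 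Extracting the coefficient of $w^{2N}$ with $N=2(m+1)d$ and with $N=(m+1)(2d+1)$ then yields exactly the two displayed formulas, the second with its factor $(-1)^m$; the vanishing of $\sum_{j=0}^{m}\mu^{2jk}$ when $(m+1)\nmid k$ is what confines the outer summations to $n_0+\cdots+n_m=2(m+1)d$, resp.\ $(m+1)(2d+1)$.

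Finally, the promised second proof replaces this coefficient extraction by a Bell-polynomial computation. Since $\mu^{2}=e^{2\pi i/(m+1)}$ is a primitive $(m+1)$-st root of unity, $\log\prod_{j=0}^{m}P(\mu^{j}w)=(m+1)\sum_{r\ge 1}[\log P]_{2(m+1)r}\,w^{2(m+1)r}$, so the product is a series in $w^{2(m+1)}$; re-exponentiating via the exponential (Bell-polynomial) formula expresses its coefficient of $w^{4(m+1)d}$, resp.\ $w^{2(m+1)(2d+1)}$, as a complete Bell polynomial in the numbers $(m+1)\,(2(m+1)r)!\,[\log P]_{2(m+1)r}$, which is then matched with the double sums above. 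The only difficulty in this route is computational --- obtaining a usable form for the Taylor coefficients of $\log P$ along the arithmetic progression $(m+1)\mathbb{Z}$ --- after which the identification with the first proof is routine.
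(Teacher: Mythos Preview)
Your factored generating-function identity is correct --- it is exactly Theorem~\ref{T6} of the paper after the substitution $z=iw$ (which explains your sign $(-1)^m$ versus the paper's minus) --- and once that identity is in hand, your coefficient extraction to obtain Theorem~\ref{T7} is the same computation the paper carries out. So the architecture of your argument matches the paper's.

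The genuine gap is your justification of the key identity. The ``root-of-unity section'' you describe cannot do what you claim: sectioning a single auxiliary variable that tracks the \emph{total} number of inserted $2$'s only isolates residue classes of that total, not the uniform configuration $a_1=\cdots=a_{2d}=m$; and ``evaluating at $\mu^j w$ and summing over $j$'' produces a sum, not the product $\prod_j P(\mu^j w)$ you need. The paper establishes the identity by an entirely different mechanism. First (Theorem~\ref{T5}) it invokes \cite[Theorem~1.6]{THP:2018} to write $\zeta^\star(\{\{2\}^m,3,\{2\}^m,1\}^d)$ and its tailed variant \emph{directly} as nested sums with denominators $k_j^{2(m+1)}$, i.e.\ as $\sum_r 2^r A(2m+2,\cdot,r)$; no sectioning is involved. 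Then (Theorem~\ref{T1} with $\lambda=1$) the bilateral generating series equals the infinite product
\[
\prod_{k\ge 1}\Bigl(1+\tfrac{(-1)^k z^{2(m+1)}}{k^{2(m+1)}}\Bigr)\Bigl(1-\tfrac{(-1)^k z^{2(m+1)}}{k^{2(m+1)}}\Bigr)^{-1}.
\]
The roots of unity enter only at this point, as the elementary algebraic factorisation $1\pm x^{m+1}=\prod_{k=0}^{m}(1-\eta_k x)$ applied inside the infinite product, which after the sine-product formula collapses to $-i\prod_{k=0}^{m}\tan(\tfrac{\pi z}{2}\mu^{k+1/2})\cot(\tfrac{\pi z}{2}\mu^k)$. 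In short, the $(m+1)$-fold product structure is an \emph{algebraic} factorisation of an already-established infinite product, not a coefficient-extraction trick on a slot variable; your sketch does not supply that step.

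A smaller remark: what you call the ``second proof'' via Bell polynomials is in the paper a proof of a \emph{different} closed form (Theorem~\ref{Bell}), obtained by taking the logarithm of the same infinite product and exponentiating; it is not presented as an alternative derivation of the double-sum formula in Theorem~\ref{T7}.
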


\vspace{0.2cm}

\begin{theorem} \label{Bell}
For any positive integers $d$ and $m$,
\begin{equation*}
\zeta^{\star}(\bigl\{\{2\}^{m-1}, 3, \{2\}^{m-1}, 1\bigr\}^d)=\pi^{4md}\!\underset{k_1, \ldots, k_d\ge 0}{\sum_{k_1+3k_2+\cdots+(2d-1)k_d=2d}}\,
\prod_{j=1}^d\frac{1}{k_j!}\left(\frac{(2-2^{2m(2j-1)}) B_{2m(2j-1)}}{(2j-1)\cdot (2m(2j-1))!}\right)^{k_j},
\end{equation*}
where the sum is taken over all non-negative integers $k_1, \ldots, k_d$ satisfying $\sum_{j=1}^d(2j-1)k_j=2d$,
and
\begin{equation*}
\begin{split}
\zeta^{\star}&(\bigl\{\{2\}^{m-1}, 3, \{2\}^{m-1}, 1\bigr\}^d, \{2\}^m)=\pi^{2m(2d+1)} \\
&\qquad\qquad\times\underset{k_1, \ldots, k_{d+1}\ge 0}{\sum_{k_1+3k_2+\cdots+(2d+1)k_{d+1}=2d+1}}\,
\prod_{j=1}^{d+1}\frac{1}{k_j!}\left(\frac{(2-2^{2m(2j-1)}) B_{2m(2j-1)}}{(2j-1)\cdot (2m(2j-1))!}\right)^{k_j},
\end{split}
\end{equation*}
where the sum is over all non-negative integers $k_1, \ldots, k_{d+1}$ such that $\sum_{j=1}^{d+1}(2j-1)k_j=2d+1$.
\end{theorem}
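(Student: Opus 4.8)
The plan is to read off both evaluations from one generating function and then expand it with the exponential formula. Write
\[
A_m(z):=\sum_{d\ge 0}\zeta^{\star}(\bigl\{\{2\}^{m-1}, 3, \{2\}^{m-1}, 1\bigr\}^d)\,z^{4md},\qquad
B_m(z):=\sum_{d\ge 0}\zeta^{\star}(\bigl\{\{2\}^{m-1}, 3, \{2\}^{m-1}, 1\bigr\}^d,\{2\}^m)\,z^{(4d+2)m}.
\]
I would start from the identity $A_m(z)-B_m(z)=\Pi_m(z)$, where for $m=1$ this is Theorem~\ref{T3} (with $\Pi_1(z)=\tanh(\pi z/2)\cot(\pi z/2)$), and for general $m$ it is what the root‑of‑unity product of generating series used in proving Theorem~\ref{T7} simplifies to once one applies the factorizations $\prod_{0\le j<2m,\ j\text{ even}}(1-e^{\pi i j/m}w)=1-w^m$ and $\prod_{0\le j<2m,\ j\text{ odd}}(1-e^{\pi i j/m}w)=1+w^m$ to each Euler factor, namely
\[
A_m(z)-B_m(z)=\Pi_m(z):=\prod_{n\ge 1}\left(\frac{1-z^{2m}/n^{2m}}{\,1+z^{2m}/n^{2m}\,}\right)^{(-1)^{n+1}}.
\]
Since $\log\Pi_m$ is an odd power series in $z^{2m}$ (checked below), writing $\varphi_m:=\log\Pi_m$ and separating $\Pi_m=e^{\varphi_m}$ into its even and odd parts as a series in $z^{2m}$ identifies $A_m(z)=\cosh\varphi_m(z)$ and $B_m(z)=-\sinh\varphi_m(z)$. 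It then remains to (i) compute $\varphi_m$ and (ii) expand the two hyperbolic functions.

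For (i), applying $\log(1-x)-\log(1+x)=-2\sum_{j\ge 1}\frac{x^{2j-1}}{2j-1}$ termwise gives
\[
\varphi_m(z)=-2\sum_{j\ge 1}\frac{z^{2m(2j-1)}}{2j-1}\sum_{n\ge 1}\frac{(-1)^{n+1}}{n^{2m(2j-1)}}=-2\sum_{j\ge 1}\frac{\eta\bigl(2m(2j-1)\bigr)}{2j-1}\,z^{2m(2j-1)},\qquad \eta(s):=\bigl(1-2^{1-s}\bigr)\zeta(s),
\]
which manifestly has only odd powers of $z^{2m}$. The Euler formula $\zeta(2N)=(-1)^{N+1}B_{2N}(2\pi)^{2N}/(2\,(2N)!)$ — equivalently $\eta(2N)=\tfrac12\zeta^{\star}(\{2\}^N)$, which is just (\ref{in_0}) read backwards — together with $(-1)^{m(2j-1)}=(-1)^m$, turns this into
\[
\varphi_m(z)=(-1)^{m+1}\psi(z^{2m}),\qquad \psi(u):=\sum_{j\ge 1}c_j\,\pi^{2m(2j-1)}\,u^{2j-1},\qquad c_j:=\frac{\bigl(2-2^{2m(2j-1)}\bigr)B_{2m(2j-1)}}{(2j-1)\,\bigl(2m(2j-1)\bigr)!}.
\]

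For (ii), since $\psi$ is odd in $u$, the exponential formula (equivalently, the complete Bell polynomials) gives
\[
e^{\psi(u)}=\sum_{N\ge 0}\left(\ \sum_{\substack{k_1,k_2,\ldots\ge 0\\ k_1+3k_2+5k_3+\cdots=N}}\ \prod_{j\ge 1}\frac{\bigl(c_j\pi^{2m(2j-1)}\bigr)^{k_j}}{k_j!}\right)u^{N},
\]
whose even part is $\cosh\psi(u)$ and whose odd part is $\sinh\psi(u)$; hence $\cosh\varphi_m=\cosh\psi$ and $\sinh\varphi_m=(-1)^{m+1}\sinh\psi$. Reading off the coefficient of $u^{2d}$ in $\cosh\psi$ and of $u^{2d+1}$ in $-\sinh\varphi_m$, using $\prod_j\pi^{2m(2j-1)k_j}=\pi^{2m\sum_j(2j-1)k_j}$ (which equals $\pi^{4md}$, resp.\ $\pi^{2m(2d+1)}$, on the relevant index set) and observing that $k_j=0$ whenever $2j-1>2d$ (resp.\ $>2d+1$), so that the sums truncate at $j=d$ (resp.\ $j=d+1$), one obtains the two stated evaluations; a short sign count using $(-1)^{m(2j-1)}=(-1)^m$ and the factor $(-1)^{m+1}$ relating $\sinh\varphi_m$ to $\sinh\psi$ pins down the signs.

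The one genuinely substantial step is the first: establishing the product form of $\Pi_m=A_m-B_m$ for arbitrary $m$, i.e.\ the $m$‑fold extension of Theorem~\ref{T3}. This leans on the generating‑function apparatus of \cite{THP:2018} and on the root‑of‑unity construction already needed for Theorem~\ref{T7}; once it is available, the remainder is the mechanical coefficient bookkeeping above, and the odd integers $2j-1$ — hence the partition constraint $\sum_j(2j-1)k_j=2d$ — arise automatically because only the odd‑index terms survive in $\log(1-x)-\log(1+x)$.
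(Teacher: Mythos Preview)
Your approach is essentially the paper's: take the product generating function, write it as $\exp(\varphi_m)$ with $\varphi_m$ an odd series in $z^{2m}$, and expand via the exponential (Bell polynomial) formula, then substitute Euler's evaluation of $\zeta(2N)$. The only wrinkle is in your first step: the product form $\Pi_m(z)=\prod_{k\ge 1}\bigl(1+(-1)^k z^{2m}/k^{2m}\bigr)\bigl(1-(-1)^k z^{2m}/k^{2m}\bigr)^{-1}$ is not something one has to extract back from the root-of-unity representation of Theorem~\ref{T6}/\ref{T7} --- it is the more primitive identity~(\ref{eq03}), obtained directly from Theorems~\ref{T1} and~\ref{T5} (hence from \cite{THP:2018}), and Theorem~\ref{T6} is then \emph{derived from} it; so your proposed route for that step is circuitous but not wrong. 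Your $\cosh/\sinh$ parity separation and the observation that $\sum_j k_j$ has fixed parity on each index set (forcing the global sign) are exactly the bookkeeping the paper does implicitly when it writes $\zeta^\star(\cdots)=P_{2d}$ and $\zeta^\star(\cdots,\{2\}^m)=P_{2d+1}$; do carry out that ``short sign count'' explicitly, since it is where errors creep in.
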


\noindent For small values of $d$, $d=1,2$, we easily get the following explicit formulas.
\begin{corollary}
For each positive integer $m$,
$$
\zeta^{\star}(\{2\}^{m-1}, 3, \{2\}^{m-1}, 1)=2\pi^{4m}\left(\frac{(1-2^{2m-1})B_{2m}}{(2m)!}\right)^2,
$$
$$
\zeta^{\star}(\{2\}^{m-1}, 3, \{2\}^{m-1}, 1, \{2\}^m)=\frac{\pi^{6m}}{3}\left(\frac{(2^{2m}-2)^3B_{2m}^3}{2\cdot(2m)!^3}+\frac{(2^{6m}-2)B_{6m}}{(6m)!}\right),
$$
\begin{equation*}
\begin{split}
\zeta^{\star}(\{2\}^{m-1}, 3, \{2\}^{m-1}, 1, \{2\}^{m-1}, 3, \{2\}^{m-1}, 1)&=\frac{(2-2^{2m})B_{2m}\,\pi^{8m}}{3\cdot(2m)!} \\
&\times\left(\frac{(1-2^{2m-1})^3B_{2m}^3}{(2m)!^3}+\frac{(2-2^{6m})B_{6m}}{(6m)!}\right).
\end{split}
\end{equation*}
\end{corollary}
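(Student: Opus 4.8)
\emph{Setup of the plan.} The plan is to show that the two families appearing in Theorem~\ref{Bell} are the even‑ and odd‑index Taylor coefficients of one exponential generating function, and then to extract them by the complete Bell polynomial formula. Concretely, I would prove
\begin{equation*}
\begin{split}
&\sum_{d\ge0}\zeta^{\star}(\bigl\{\{2\}^{m-1},3,\{2\}^{m-1},1\bigr\}^{d})\,x^{4md}
+(-1)^{m-1}\sum_{d\ge0}\zeta^{\star}(\bigl\{\{2\}^{m-1},3,\{2\}^{m-1},1\bigr\}^{d},\{2\}^{m})\,x^{2m(2d+1)}\\
&\qquad\qquad=\exp\!\left((-1)^{m-1}\sum_{\substack{s\ge1\\ s\ \text{odd}}}\frac{\zeta^{\star}(\{2\}^{ms})}{s}\,x^{2ms}\right),
\end{split}
\end{equation*}
after which Theorem~\ref{Bell} is immediate.

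\emph{Step 1: identify the left‑hand side with the generating function behind Theorem~\ref{T7}.} Set $S_n(w)=\sum_{l=0}^{n}(4^{l+1}-1)\frac{B_{2l+2}}{(2l+2)!}\frac{B_{2n-2l}}{(2n-2l)!}w^{l}$. A Cauchy product together with $\sum_{l\ge0}(4^{l+1}-1)\frac{B_{2l+2}}{(2l+2)!}u^{2l}=\frac{\tanh(u/2)}{2u}$ and $\sum_{p\ge0}\frac{B_{2p}}{(2p)!}u^{2p}=\frac u2\coth\frac u2$ gives $\sum_{n\ge0}S_n(w)x^{2n}=\frac1{4\sqrt w}\tanh\!\bigl(\tfrac{\sqrt w\,x}{2}\bigr)\coth\tfrac x2$. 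Replacing $m$ by $m-1$ in Theorem~\ref{T7}, its inner sum is $S_{n_k}(e^{\pi i/m})$ and the factor $\exp\!\bigl(\tfrac{2\pi i}{m}\sum_k kn_k\bigr)$ turns the nested sum into a Cauchy product over the $m$‑th roots of unity. Hence the left‑hand side above equals
\begin{equation*}
F_m(x):=4^m\prod_{k=0}^{m-1}\frac{1}{4e^{\pi i/(2m)}}\tanh\!\Bigl(\tfrac{\pi x}{2}e^{\pi i(2k+1)/(2m)}\Bigr)\coth\!\Bigl(\tfrac{\pi x}{2}e^{\pi ik/m}\Bigr)
=\frac1i\prod_{k=0}^{m-1}\frac{\tanh\!\bigl(\tfrac{\pi x}{2}e^{\pi i(2k+1)/(2m)}\bigr)}{\tanh\!\bigl(\tfrac{\pi x}{2}e^{\pi ik/m}\bigr)},
\end{equation*}
whose nonzero coefficients sit in degrees $x^{2m\nu}$ only; the coefficient of $x^{4md}$ is $\zeta^{\star}(\dots^{d})$ and that of $x^{2m(2d+1)}$ is $(-1)^{m-1}\zeta^{\star}(\dots^{d},\{2\}^{m})$, the sign being the one in the second identity of Theorem~\ref{T7}.

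\emph{Step 2: compute $\log F_m$ — this is the crux.} Since $\coth=1/\tanh$ and $\frac{d}{dz}\log\tanh z=\frac2{\sinh2z}$, while $\frac1{\sinh w}=\sum_{r\ge0}(-1)^r\beta_r w^{2r-1}$ (the substitution $x\mapsto iw$ in $\sum_{r\ge0}\beta_r x^{2r}=\frac{x}{\sin x}$, where $\beta_r\pi^{2r}=\zeta^{\star}(\{2\}^{r})$ by~(\ref{in_0})), one has $\log\tanh z=\log z+\sum_{n\ge1}\frac{(-1)^n2^{2n-1}\beta_n}{n}z^{2n}$. Feeding this into the product for $F_m$ and summing over $k$: the relation $\sum_{k=0}^{m-1}e^{2\pi ikn/m}=m$ if $m\mid n$ and $0$ otherwise projects onto exponents of $x$ divisible by $2m$; on those, the phases $e^{\pi i/(2m)}$ and $e^{\pi ik/m}$ contribute a further $(-1)^s$ at exponent $2ms$, killing even $s$; and the infinitely many $\log z$ terms, together with the constants $\log\frac14-\tfrac12\log e^{\pi i/m}$, cancel among themselves. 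What remains is $\log F_m(x)=(-1)^{m-1}\sum_{s\ \text{odd}}\frac{\beta_{ms}}{s}\pi^{2ms}x^{2ms}=(-1)^{m-1}\sum_{s\ \text{odd}}\frac{\zeta^{\star}(\{2\}^{ms})}{s}x^{2ms}$, which is the displayed identity. (For $m=1$, $F_1(x)=\tanh(\tfrac{\pi x}{2})\cot(\tfrac{\pi x}{2})$, the generating function recorded after Theorem~\ref{T4}.) The delicate point is making this interplay of the $m$‑th, $2m$‑th and $4m$‑th roots of unity precise enough that all divergent and constant pieces visibly cancel.

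\emph{Step 3: expand the exponential.} The complete Bell polynomial identity $\exp\!\bigl(\sum_{j\ge1}a_jt^{j}\bigr)=\sum_{N\ge0}t^{N}\sum_{\sum_j jk_j=N}\prod_j\frac{a_j^{k_j}}{k_j!}$, applied with $t=x^{2m}$, $a_{2j-1}=(-1)^{m-1}\zeta^{\star}(\{2\}^{m(2j-1)})/(2j-1)$ and $a_{2j}=0$, gives for $N=2d$ and for $N=2d+1$
\begin{equation*}
[x^{2mN}]F_m=\sum_{\sum_{j\ge1}(2j-1)k_j=N}\ \prod_{j\ge1}\frac1{k_j!}\left((-1)^{m-1}\frac{\zeta^{\star}(\{2\}^{m(2j-1)})}{2j-1}\right)^{k_j}.
\end{equation*}
Substituting $\zeta^{\star}(\{2\}^{m(2j-1)})=\frac{(-1)^{m-1}(2^{2m(2j-1)}-2)B_{2m(2j-1)}}{(2m(2j-1))!}\pi^{2m(2j-1)}$ and pulling $\prod_j\pi^{2m(2j-1)k_j}=\pi^{2mN}$ out of the product, the two accumulated factors $(-1)^{(m-1)\sum_j k_j}$ multiply to $1$; comparing with the coefficients from Step~1 (which carry an extra $(-1)^{m-1}$ in the odd case) one gets exactly the two formulas of Theorem~\ref{Bell} — the first with overall factor $\pi^{4md}$ (case $N=2d$), the second with $\pi^{2m(2d+1)}$ (case $N=2d+1$). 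Specializing to $N=2,3,4$ then yields the Corollary.
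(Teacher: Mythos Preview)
Your final sentence is the whole proof: the Corollary is obtained from Theorem~\ref{Bell} by substituting $d=1$ (first two formulas) and $d=2$ (third formula) and simplifying. That is exactly what the paper does --- the line preceding the Corollary reads ``For small values of $d$, $d=1,2$, we easily get the following explicit formulas.''

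Everything in your Steps~1--3 is a re-derivation of Theorem~\ref{Bell}, which is already proved in the paper and can simply be quoted. Moreover, your route to it is considerably longer than the paper's. The paper's proof of Theorem~\ref{Bell} starts from the infinite product~(\ref{eq03}),
\[
\prod_{k\ge1}\Bigl(1+\tfrac{(-1)^kz^{2m}}{k^{2m}}\Bigr)\Bigl(1-\tfrac{(-1)^kz^{2m}}{k^{2m}}\Bigr)^{-1},
\]
takes its logarithm directly, and reads off the Bell-polynomial expansion in three lines --- no roots of unity, no $\log\tanh$ expansion, no cancellation of divergent pieces. Your path instead goes through Theorem~\ref{T7} (itself derived from~(\ref{eq03}) via Theorem~\ref{T6}), rebuilds a generating function $F_m$ out of its coefficients, and then tries to recompute $\log F_m$ by a roots-of-unity argument whose ``delicate point'' you explicitly leave unresolved. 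That is a detour: (\ref{eq03}) $\to$ Theorem~\ref{T6} $\to$ Theorem~\ref{T7} $\to$ reconstruct $F_m$ $\to$ $\log F_m$, when the paper just does (\ref{eq03}) $\to$ $\log$.

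One concrete slip: your parenthetical check ``For $m=1$, $F_1(x)=\tanh(\tfrac{\pi x}{2})\cot(\tfrac{\pi x}{2})$'' is wrong. From your own product, $F_1(x)=\tfrac{1}{i}\tanh(\tfrac{i\pi x}{2})\coth(\tfrac{\pi x}{2})=\tan(\tfrac{\pi x}{2})\coth(\tfrac{\pi x}{2})$, the \emph{reciprocal} of the generating function in Theorem~\ref{T3}. In fact your $F_m(x)$ equals the paper's generating function evaluated at $ix$, which is why your odd-index coefficients carry the extra $(-1)^{m-1}$ and your $\log F_m$ picks up the sign $(-1)^{m-1}$; these two effects are consistent, so your conclusion is correct, but the $m=1$ sanity check as written is not.
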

\noindent Note that an MZV version of Theorem \ref{T7}  or Theorem \ref{Bell} is open. A related conjecture was formulated by Borwein, Bradley, and Broadhurst \cite{BBB:1997},
\begin{equation} \label{conj}
\zeta(\{2\}^m, \{3, \{2\}^m, 1, \{2\}^m\}^d) =^{\vspace{-0.1cm}\hspace{-0.2cm}?} \frac{2(m+1)\cdot\pi^{4(m+1)d+2m}}{(2(m+1)(2d+1))!}, 
\end{equation}
and it still remains unresolved. A non-explicit version of (\ref{conj}), 
$$
\zeta(\{2\}^m, \{3, \{2\}^m, 1, \{2\}^m\}^d)\in {\mathbb Q}\cdot\pi^{4(m+1)d+2m},
$$
was proved by Charlton \cite{Ch:2015} by using motivic zeta values.

The paper is organized as follows. In Section \ref{S2}, we derive generating functions for restricted sums of alternating Euler sums and prove Theorem \ref{T4}.
In Section \ref{S3}, we find generating functions for the sequences $\zeta^\star(\{\{2\}^m, 3, \{2\}^m, 1\}^d)$ and $\zeta^\star(\{\{2\}^m, 3, \{2\}^m, 1\}^d, \{2\}^{m+1})$, 
and prove Theorem \ref{T7} and Theorem \ref{Bell}. In Section \ref{S4}, we prove the sum formulas for multiple zeta star values on 3-2-1 indices.

\section{Explicit evaluation of $\zeta^{\star}(\{3,1\}^d)$ and $\zeta^{\star}(\{3,1\}^d, 2)$.} \label{S2}

We consider alternating Euler sums (or alternating multiple zeta values) defined by
$$
\zeta({\bf s}; {\mathbf{\varepsilon}})=\zeta(s_1, \ldots, s_r; \varepsilon_1, \ldots, \varepsilon_r)=\sum_{k_1>k_2>\cdots>k_r\ge 1}
\frac{\varepsilon_1^{k_1}\cdots\varepsilon_r^{k_r}}{k_1^{s_1}\cdots k_r^{s_r}}
$$
for all positive integers $s_1,\ldots, s_r$ and $\varepsilon_j\in\{-1,1\}$, $j=1,\ldots,r$, with $(s_1, \varepsilon_1)\ne (1,1)$ in order for the series to converge.
We will also use the signed number notation for the multi-index ${\bf s}$ associated with $\zeta({\bf s}; {\mathbf{\varepsilon}})$ by writing
$s_j$ for the $j$-th component of ${\bf s}$ if $\varepsilon_j=1$, and $\overline{s}_j$ if $\varepsilon_j=-1$. For example,
$$
\zeta(\overline{s}_1, s_2, s_3, \overline{s}_4, \overline{s}_5)=\zeta(s_1, s_2, s_3, s_4, s_5; -1, 1, 1, -1, -1).
$$
As for the multiple zeta (star) values, we call the number $|{\bf s}|=s_1+\cdots+s_r$  the weight, and $r$  the depth (or length) of the alternating Euler sum
$\zeta({\bf s}; {\mathbf{\varepsilon}})$.
For a special type of alternating zeta values of the form 
$$
\zeta(ms_1, ms_2, \ldots, ms_r; (-1)^{s_1}, (-1)^{s_2}, \ldots, (-1)^{s_r}),
$$ 
with positive integers $s_1, \ldots, s_r$,
we define the sum
of all such values of depth $r$ and weight $mn$ with arguments multiples of $m$ by
\begin{equation} \label{aa}
A(m,n,r)=\sum_{s_1+\cdots+s_r=n}\zeta(ms_1, ms_2, \ldots, ms_r; (-1)^{s_1}, (-1)^{s_2}, \ldots, (-1)^{s_r}).
\end{equation}
We will need the following theorem, the proof of which was inspired by \cite[Theorem 5.1]{Chen:2017JNT}.
\begin{theorem} \label{T1}
For $\lambda\in\mathbb{R}$, $z\in{\mathbb C}$, $|z|<1$, we have
\begin{equation}
\label{eq01}
\sum_{n=0}^{\infty}\left(\sum_{r=1}^n A(m,n,r)(1+\lambda)^r\right)z^{mn}
=\prod_{k=1}^{\infty}\left(1+\lambda\frac{(-1)^kz^m}{k^m}\right)\left(1-\frac{(-1)^kz^m}{k^m}\right)^{-1}.
\end{equation}
\end{theorem}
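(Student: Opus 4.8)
The plan is to identify both sides of \eqref{eq01} as generating functions that, when expanded, have matching coefficients. The right-hand side is an infinite product over $k\ge 1$, and the natural approach is to expand each factor as a formal power series in $z^m$ and organize the result by depth and weight. Concretely, I would write
$$
\prod_{k=1}^{\infty}\left(1+\lambda\frac{(-1)^kz^m}{k^m}\right)\left(1-\frac{(-1)^kz^m}{k^m}\right)^{-1}
=\prod_{k=1}^{\infty}\left(1+\lambda\frac{(-1)^kz^m}{k^m}\right)\left(\sum_{j\ge 0}\frac{z^{mj}}{k^{mj}}\right),
$$
so that the logarithm of the product (or direct multiplication) turns into a sum over partitions: the coefficient of $z^{mn}$ is a sum over ways of distributing the total weight $n$ among the indices $k_1>k_2>\cdots>k_r$, where at each $k_i$ we either take a factor $\lambda(-1)^{k_i}/k_i^{m}$ once, or a factor $1/k_i^{ms_i}$ with $s_i\ge 1$ from the geometric series. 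The key bookkeeping observation is that a factor from the first (binomial) piece at $k_i$ contributes $\lambda(-1)^{k_i}/k_i^m$, i.e. exactly the $s_i=1$ term weighted by an extra $\lambda$, while a factor from the geometric piece at $k_i$ with exponent $s_i\ge 1$ contributes $1/k_i^{ms_i}$; but note that when both pieces at the same $k_i$ are used, the product is $\lambda(-1)^{k_i}/k_i^{m}\cdot 1/k_i^{ms}=\lambda(-1)^{k_i}/k_i^{m(s+1)}$. So I should reorganize: at each index $k_i$ that appears, the total contribution is $(1+\lambda(-1)^{k_i}/k_i^m)\cdot(\text{geometric tail})$, and expanding gives, for each actual exponent $t_i\ge 1$ sitting at $k_i$, either the ``plain'' term $1/k_i^{mt_i}$ or the ``$\lambda$-marked'' term $\lambda(-1)^{k_i}/k_i^{mt_i}$ (the latter arising when the binomial factor is used together with a geometric term of exponent $t_i-1\ge 0$).

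With that reorganization, the coefficient of $z^{mn}$ on the right is
$$
\sum_{r\ge 1}\ \sum_{\substack{k_1>\cdots>k_r\ge 1}}\ \sum_{\substack{t_1+\cdots+t_r=n\\ t_i\ge 1}}\ \prod_{i=1}^{r}\frac{1+\lambda(-1)^{k_i}[\![t_i\ \text{contains a }\lambda\text{-mark}]\!]}{k_i^{mt_i}},
$$
and summing over the $2^r$ choices of which of the $r$ indices carry a $\lambda$-mark produces a factor $\prod_{i}(1+\lambda(-1)^{k_i})$ — \emph{not} quite, because the sign $(-1)^{k_i}$ depends on $k_i$. Here is where I connect to the alternating Euler sums: I claim that summing over the choice of marks and then over $k_1>\cdots>k_r$ with fixed $(t_1,\dots,t_r)$ gives exactly $\sum_{S\subseteq\{1,\dots,r\}}\lambda^{|S|}\zeta(mt_1,\dots,mt_r;\varepsilon_1,\dots,\varepsilon_r)$ where $\varepsilon_i=(-1)^{t_i}$ when $i\in S$ — no, more carefully: the sign attached at $k_i$ when marked is $(-1)^{k_i}$, and one checks that $\sum_{k_1>\cdots>k_r}\prod_i (-1)^{k_i\,[i\in S]}/k_i^{mt_i}$ is precisely $\zeta(mt_1,\dots,mt_r;\varepsilon_1,\dots,\varepsilon_r)$ with $\varepsilon_i=-1$ for $i\in S$ and $\varepsilon_i=+1$ otherwise. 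To match the definition \eqref{aa} of $A(m,n,r)$, where the sign at position $i$ is forced to be $(-1)^{s_i}$, I need the marked positions to be exactly those with $t_i$ odd; but in the sum over all subsets $S$ this is not forced. The resolution — and this is the crux — is that the unmarked contribution at $k_i$ with exponent $t_i$ and the marked contribution combine as $(1+\lambda)(-1)^{?}$ only after one also uses the pairing of each index $k_i$ with the \emph{next larger} summation variable; I would instead follow the cited \cite[Theorem 5.1]{Chen:2017JNT} and first prove the identity at the level of truncated products $\prod_{k=1}^{N}$, expand, and recognize the alternating-sign pattern by induction on $N$, with the induction step isolating the factor at $k=N$.

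I expect the main obstacle to be exactly this sign-matching: showing that the naive ``$2^r$ subsets'' expansion of the product reorganizes into $\sum_r A(m,n,r)(1+\lambda)^r$ with the \emph{specific} sign convention $\varepsilon_i=(-1)^{s_i}$ built into $A$. The cleanest route is probably to prove a stronger, more symmetric statement first: that
$$
\prod_{k=1}^{\infty}\frac{1+\lambda(-1)^k z^m/k^m}{1-(-1)^k z^m/k^m}=\sum_{n\ge 0}z^{mn}\sum_{\substack{s_1+\cdots+s_r=n}}(1+\lambda)^r\zeta\bigl(ms_1,\dots,ms_r;(-1)^{s_1},\dots,(-1)^{s_r}\bigr),
$$
by writing the $k$-th factor as $1+\bigl((1+\lambda)(-1)^k z^m/k^m\bigr)\cdot\frac{1}{1-(-1)^kz^m/k^m}$ — note that $\frac{1+\lambda u}{1-u}=1+(1+\lambda)\frac{u}{1-u}$ with $u=(-1)^kz^m/k^m$ — so that expanding $\prod_k\bigl(1+(1+\lambda)u_k/(1-u_k)\bigr)$ over all finite subsets $\{k_1>\cdots>k_r\}$ gives $\sum (1+\lambda)^r\prod_i \frac{u_{k_i}}{1-u_{k_i}}=\sum(1+\lambda)^r\prod_i\sum_{s_i\ge 1}u_{k_i}^{s_i}$, and $u_{k_i}^{s_i}=(-1)^{k_i s_i}z^{ms_i}/k_i^{ms_i}=\bigl((-1)^{s_i}\bigr)^{k_i}z^{ms_i}/k_i^{ms_i}$, which upon summing over $k_1>\cdots>k_r$ yields precisely $\zeta(ms_1,\dots,ms_r;(-1)^{s_1},\dots,(-1)^{s_r})$. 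This identity $\frac{1+\lambda u}{1-u}=1+(1+\lambda)\frac{u}{1-u}$ is the whole trick; once it is in hand the rest is routine rearrangement of absolutely convergent series for $|z|<1$, and one should add a sentence justifying the interchange of the infinite product, the geometric expansions, and the sum over $r$ (e.g. by comparison with $\prod_k(1+|u_k|)/(1-|u_k|)$, which converges since $\sum 1/k^m<\infty$ for $m\ge 2$, together with a separate elementary argument or a limiting argument from $m\ge 2$ for the case $m=1$ where the product is only conditionally convergent but the stated region $|z|<1$ still makes each coefficient of $z^{mn}$ a finite expression).
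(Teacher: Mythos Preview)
Your proposal is correct and, once you arrive at the identity $\tfrac{1+\lambda u}{1-u}=1+(1+\lambda)\tfrac{u}{1-u}$ with $u=(-1)^k z^m/k^m$, it is exactly the paper's argument: the paper expands $(1+\lambda u_k)\sum_{s\ge0}u_k^s$ to $1+(1+\lambda)\sum_{s\ge1}u_k^s$, then multiplies over $k$ and reads off $A(m,n,r)$ just as you describe. The first two paragraphs of your write-up (the ``$\lambda$-marked subsets'' bookkeeping) are a detour you can drop entirely---you correctly diagnose that it does not yield the sign pattern $\varepsilon_i=(-1)^{s_i}$, and the clean one-line algebraic identity you give at the end resolves the issue completely; your added remark on absolute convergence for $m\ge2$ is a welcome justification the paper omits.
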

\begin{proof} Let the right hand side of (\ref{eq01}) be $F(\lambda, z)$. Then we have
\begin{equation*}
\begin{split}
F(\lambda, z)&=\prod_{k=1}^{\infty}\left[\left(1+\lambda\frac{(-1)^kz^m}{k^m}\right)\cdot\sum_{s=0}^{\infty}\frac{(-1)^{ks}z^{ms}}{k^{ms}}\right] \\
&=\prod_{k=1}^{\infty}\left[\sum_{s=0}^{\infty}\frac{(-1)^{ks}z^{ms}}{k^{ms}} +\lambda\sum_{s=0}^{\infty}\frac{(-1)^{k(s+1)}z^{m(s+1)}}{k^{m(s+1)}}\right] \\
&=\prod_{k=1}^{\infty}\left[\sum_{s=0}^{\infty}\frac{(-1)^{ks}z^{ms}}{k^{ms}} +\lambda\sum_{s=1}^{\infty}\frac{(-1)^{ks)}z^{ms}}{k^{ms}}\right] \\
&=\prod_{k=1}^{\infty}\left[1+(\lambda+1)\sum_{s=1}^{\infty}\frac{(-1)^{ks}z^{ms}}{k^{ms}}\right] \\
&=1+\sum_{r\ge 1}\sum_{s_1, \ldots, s_r\ge 1}(\lambda+1)^r\sum_{k_1>\cdots >k_r\ge 1}\frac{(-1)^{k_1s_1}\cdots (-1)^{k_rs_r}}{k_1^{ms_1}
\cdots k_r^{ms_r}}\cdot z^{m(s_1+\cdots+s_r)} \\
&=1+\sum_{n=1}^{\infty} z^{mn}\Bigl(\sum_{r=1}^n(\lambda+1)^r A(m,n,r)\Bigr),
\end{split}
\end{equation*}
and the statement follows.
\end{proof}
\noindent To express the values of $\zeta^{\star}(\{3,1\}^d)$ and $\zeta^{\star}(\{3,1\}^d, 2)$ in terms of (\ref{aa}), we will use  \cite[Theorem 1.3 and Theorem 1.6]{THP:2018}.
\begin{theorem}  \label{T2}
For any positive integer $d$, we have
\begin{align*}
\zeta^{\star}(\{3,1\}^d) &= \sum_{r=1}^{2d} 2^r\cdot A(2,2d,r) \\[2pt]
\zeta^{\star}(\{3,1\}^d, 2) &= -\sum_{r=1}^{2d+1} 2^r\cdot A(2,2d+1,r).
\end{align*} 
\end{theorem}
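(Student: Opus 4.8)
\textbf{Proof proposal for Theorem \ref{T2}.}

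The plan is to combine the generating-function identity from Theorem \ref{T1} (specialized to $m=2$) with the generating functions for $\zeta^\star(\{3,1\}^d)$ and $\zeta^\star(\{3,1\}^d,2)$ that were established in \cite{THP:2018}, namely \cite[Theorem 1.3 and Theorem 1.6]{THP:2018}. First I would recall, from \cite{THP:2018}, the closed forms of
$$
\sum_{d=0}^\infty \zeta^\star(\{3,1\}^d)\,z^{4d}\qquad\text{and}\qquad \sum_{d=0}^\infty \zeta^\star(\{3,1\}^d,2)\,z^{4d+2},
$$
which by the stated theorems are expressible through the infinite product $\prod_{k\ge 1}\bigl(1+c\,(-1)^k z^2/k^2\bigr)\bigl(1-(-1)^kz^2/k^2\bigr)^{-1}$ type expressions; the precise constant $c$ corresponds to $\lambda=1$ in \eqref{eq01}, since the weight-$2$ substitution $m=2$ together with $1+\lambda = 2$ produces exactly the factor $2^r$ attached to $A(2,n,r)$ in the sum on the right of \eqref{eq01}.

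The key steps, in order, are: (i) set $m=2$ and $\lambda=1$ in Theorem \ref{T1}, obtaining
$$
\sum_{n=0}^\infty\Bigl(\sum_{r=1}^n 2^r A(2,n,r)\Bigr)z^{2n}
=\prod_{k=1}^\infty\Bigl(1+\frac{(-1)^k z^2}{k^2}\Bigr)\Bigl(1-\frac{(-1)^k z^2}{k^2}\Bigr)^{-1};
$$
(ii) split the left side into even and odd parts in $n$, i.e. write $\sum_{d\ge 0}\bigl(\sum_{r=1}^{2d}2^rA(2,2d,r)\bigr)z^{4d}$ for the even part and $\sum_{d\ge 0}\bigl(\sum_{r=1}^{2d+1}2^rA(2,2d+1,r)\bigr)z^{4d+2}$ for the odd part; (iii) identify the right-hand product with the generating function from \cite{THP:2018} for the combination $\sum_d\zeta^\star(\{3,1\}^d)z^{4d}-\sum_d\zeta^\star(\{3,1\}^d,2)z^{4d+2}$ (this combination is exactly $\tanh(\pi z/2)\cot(\pi z/2)$ as previewed before Theorem \ref{T3}, and it is the even-minus-odd decomposition that matches the parity split in (ii)); (iv) compare coefficients of $z^{4d}$ and of $z^{4d+2}$ separately, which yields the two asserted identities, the minus sign in the second line coming precisely from the even/odd split of the single generating function on the right.

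Concretely, once the product on the right of \eqref{eq01} with $m=2,\lambda=1$ is shown to equal the generating series $\sum_{d\ge0}\zeta^\star(\{3,1\}^d)z^{4d}-\sum_{d\ge0}\zeta^\star(\{3,1\}^d,2)z^{4d+2}$, one reads off that the coefficient of $z^{4d}$ gives $\sum_{r=1}^{2d}2^rA(2,2d,r)=\zeta^\star(\{3,1\}^d)$ and the coefficient of $z^{4d+2}$ gives $\sum_{r=1}^{2d+1}2^rA(2,2d+1,r)=-\zeta^\star(\{3,1\}^d,2)$. The main obstacle I anticipate is step (iii): verifying that the infinite product arising from Theorem \ref{T1} at $(m,\lambda)=(2,1)$ is literally the generating function recorded in \cite[Theorems 1.3 and 1.6]{THP:2018}. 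This requires matching the product representation of the \cite{THP:2018} generating function (which those theorems presumably give in terms of $\sin/\sinh$ or Gamma-function quotients) against $\prod_{k\ge1}\bigl(1+(-1)^kz^2/k^2\bigr)\bigl(1-(-1)^kz^2/k^2\bigr)^{-1}$; using the Weierstrass products $\sin(\pi w)=\pi w\prod_{k\ge1}(1-w^2/k^2)$ and $\sinh(\pi w)=\pi w\prod_{k\ge1}(1+w^2/k^2)$, one splits the factor $(1+(-1)^kz^2/k^2)$ over even and odd $k$ and similarly for $(1-(-1)^kz^2/k^2)^{-1}$, and after regrouping the four resulting sub-products one should obtain a ratio of hyperbolic and trigonometric sines evaluated at suitable multiples of $z$, matching $\tanh(\pi z/2)\cot(\pi z/2)$ up to the bookkeeping already implicit in \cite{THP:2018}. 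Everything else is routine coefficient extraction.
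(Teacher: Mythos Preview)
Your plan has a genuine gap at step (iii), and it stems from a misreading of what \cite[Theorems 1.3 and 1.6]{THP:2018} actually provide. Those theorems do \emph{not} give the generating function $\sum_d\zeta^\star(\{3,1\}^d)z^{4d}-\sum_d\zeta^\star(\{3,1\}^d,2)z^{4d+2}$ in closed form (as a $\sin/\sinh$ or $\Gamma$-quotient, or as the product $\prod_k(1+(-1)^kz^2/k^2)(1-(-1)^kz^2/k^2)^{-1}$). What they give is, for each fixed $d$, the \emph{sum representation}
\[
\zeta^{\star}(\{3,1\}^d)=\sum_{k_1\ge\cdots \ge k_{2d}\ge 1}\ \prod_{j=1}^{2d}\frac{(-1)^{k_j}\,2^{\Delta(k_{j-1},k_j)}}{k_j^2},
\]
and the analogous one for $\zeta^\star(\{3,1\}^d,2)$. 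The identification with $\tanh(\pi z/2)\cot(\pi z/2)$ that you invoke is precisely Theorem \ref{T3} of the present paper, and in the paper's logic Theorem \ref{T3} is \emph{derived from} Theorem \ref{T2} (together with Theorem \ref{T1}). So your step (iii) is circular: you are assuming the content of Theorem \ref{T3} in order to prove Theorem \ref{T2}.

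The paper's own proof avoids this by working directly with the sum representation above and never touching Theorem \ref{T1} or any generating function in $z$. One simply observes that the exponent $\sum_{j\ge 2}\Delta(k_{j-1},k_j)$ counts the number of strict descents in the chain $k_1\ge\cdots\ge k_{2d}$; stratifying the sum by this count $r-1$ (so that there are $r$ distinct values among the $k_j$) and collecting equal consecutive blocks of sizes $s_1,\dots,s_r$ with $s_1+\cdots+s_r=2d$ yields exactly
\[
\zeta^{\star}(\{3,1\}^d)=\sum_{r=1}^{2d}2^r\!\!\sum_{\substack{s_1+\cdots+s_r=2d\\ s_i\ge 1}}\ \sum_{k_1>\cdots>k_r\ge 1}\prod_{j=1}^r\frac{(-1)^{s_jk_j}}{k_j^{2s_j}}
=\sum_{r=1}^{2d}2^r A(2,2d,r),
\]
and similarly for the second identity. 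If you want to salvage your generating-function route, you would still have to pass from the sum representation of \cite{THP:2018} to the infinite product, and the only visible way to do that is precisely this regrouping --- at which point you have reproduced the paper's argument and the detour through Theorem \ref{T1} becomes unnecessary.
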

\begin{proof} From  \cite[Theorem 1.3]{THP:2018} we have
$$
\zeta^{\star}(\{3,1\}^d)=\sum_{k_1\ge\cdots \ge k_{2d}\ge 1}\prod_{j=1}^{2d}\frac{(-1)^{k_j} 2^{\Delta(k_{j-1}, k_j)}}{k_j^2},
$$
where $k_0=0$ and 
\begin{equation*}
\Delta(a,b)=\begin{cases}
0, \qquad\text{if}\quad a=b; \\
1, \qquad\text{else}.
\end{cases}
\end{equation*}
Rewriting it in the form
$$
\zeta^{\star}(\{3,1\}^d)=2\sum_{k_1\ge k_2\ge\cdots \ge k_{2d}\ge 1}\frac{(-1)^{k_1+k_2+\cdots +k_{2d}}}{k_1^2k_2^2\cdots k_{2d}^2}\cdot 2^{\sum_{j=2}^{2d}\Delta(k_{j-1}, k_j)}
$$
and then grouping powers of two, we get
$$
\zeta^{\star}(\{3,1\}^d)=\sum_{r=1}^{2d}2^r \, \underset{s_1,\ldots, s_r\ge 1}{\sum_{s_1+\cdots+s_r=2d}}\,\,\,\sum_{k_1>k_2>\cdots k_r\ge 1}
\frac{(-1)^{s_1k_1}(-1)^{s_2k_2}\cdots (-1)^{s_rk_r}}{k_1^{2s_1}k_2^{2s_2}\cdots k_r^{2s_r}},
$$
which is exactly the required formula.

Similarly, from  \cite[Theorem 1.6]{THP:2018} we have
$$
\zeta^{\star}(\{3,1\}^d, 2)= -\sum_{k_0\ge k_1\ge\cdots \ge k_{2d}\ge 1}\prod_{j=0}^{2d}\frac{(-1)^{k_j} 2^{\Delta(k_{j-1}, k_j)}}{k_j^2},
$$
where $k_{-1}=0$. Regrouping in powers of two, we obtain
\begin{equation*}
\begin{split}
\zeta^{\star}(\{3,1\}^d, 2)&=-2\sum_{k_0\ge k_1\ge k_2\ge\cdots \ge k_{2d}\ge 1}\frac{(-1)^{k_0+k_1+\cdots +k_{2d}}}{k_0^2k_1^2\cdots k_{2d}^2}\cdot 2^{\sum_{j=1}^{2d}\Delta(k_{j-1}, k_j)} \\[5pt]
&=-\sum_{r=1}^{2d+1}2^r \, \underset{s_1,\ldots, s_r\ge 1}{\sum_{s_1+\cdots+s_r=2d+1}}\,\,\,\sum_{k_1>k_2>\cdots k_r\ge 1}
\frac{(-1)^{s_1k_1}(-1)^{s_2k_2}\cdots (-1)^{s_rk_r}}{k_1^{2s_1}k_2^{2s_2}\cdots k_r^{2s_r}} \\[5pt]
&= -\sum_{r=1}^{2d} 2^r\cdot A(2, 2d+1, r),
\end{split}
\end{equation*}
and the theorem follows.
\end{proof}
\begin{theorem} \label{T3}
For $z\in{\mathbb C}$, $|z|<1$, we have the following generating function:
\begin{equation*}
\sum_{d=0}^{\infty}\zeta^{\star}(\{3,1\}^d)z^{4d}-\sum_{d=0}^{\infty}\zeta^{\star}(\{3,1\}^d,2)z^{4d+2}=  \tanh(\pi z/2)\cdot\cot(\pi z/2).
%\tanh\left(\frac{\pi z}{2}\right)\cdot\cot\left(\frac{\pi z}{2}\right).
\end{equation*}
\end{theorem}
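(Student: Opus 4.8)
The plan is to merge the two series on the left of the asserted identity into a single power series, to identify that series with the left-hand side of \eqref{eq01} for the special choice $m=2$, $\lambda=1$, and then to evaluate the resulting infinite product in closed form via the Weierstrass factorizations of $\sin$, $\cos$, $\sinh$, $\cosh$.

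First I would apply Theorem~\ref{T2}. For $d\ge 1$ it gives $\zeta^{\star}(\{3,1\}^d)=\sum_{r=1}^{2d}2^r A(2,2d,r)$ and $-\zeta^{\star}(\{3,1\}^d,2)=\sum_{r=1}^{2d+1}2^r A(2,2d+1,r)$, while separately $\zeta^{\star}(\{3,1\}^0)=1$. Writing $z^{4d}=z^{2\cdot 2d}$ and $z^{4d+2}=z^{2(2d+1)}$, the even-indexed ($n=2d$) terms and the odd-indexed ($n=2d+1$) terms fit together, so that the left-hand side of the theorem becomes
\[
1+\sum_{n=1}^{\infty}\Bigl(\sum_{r=1}^{n}2^r A(2,n,r)\Bigr)z^{2n};
\]
one checks along the way that the $n=1$ instance of Theorem~\ref{T2} already holds at $d=0$, since $2A(2,1,1)=2\zeta(2;-1)=-\zeta(2)=-\zeta^\star(2)$. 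By Theorem~\ref{T1} with $m=2$ and $\lambda=1$, whose proof in fact shows $F(\lambda,z)=1+\sum_{n\ge1}(\sum_{r=1}^n(1+\lambda)^rA(m,n,r))z^{mn}$, this series equals
\[
F(1,z)=\prod_{k=1}^{\infty}\Bigl(1+\frac{(-1)^k z^2}{k^2}\Bigr)\Bigl(1-\frac{(-1)^k z^2}{k^2}\Bigr)^{-1},
\]
the constant term $1$ being consistent with $\tanh(\pi z/2)\cot(\pi z/2)\to 1$ as $z\to 0$.

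It then remains to put $F(1,z)$ into closed form. For $|z|<1$ the products $\prod_{k\ge1}(1\pm z^2/k^2)$ converge absolutely, so I may regroup $F(1,z)$ according to the parity of $k$, obtaining
\[
F(1,z)=\prod_{k\ \mathrm{odd}}\frac{1-z^2/k^2}{1+z^2/k^2}\cdot\prod_{k\ \mathrm{even}}\frac{1+z^2/k^2}{1-z^2/k^2}.
\]
Now I invoke the classical identities $\prod_{k\ \mathrm{odd}}(1-z^2/k^2)=\cos(\pi z/2)$ and $\prod_{k\ \mathrm{odd}}(1+z^2/k^2)=\cosh(\pi z/2)$, together with $\prod_{k\ \mathrm{even}}(1-z^2/k^2)=\sin(\pi z/2)/(\pi z/2)$ and $\prod_{k\ \mathrm{even}}(1+z^2/k^2)=\sinh(\pi z/2)/(\pi z/2)$ (the latter two obtained by substituting $k=2j$). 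The factors $\pi z/2$ cancel, and
\[
F(1,z)=\frac{\cos(\pi z/2)}{\cosh(\pi z/2)}\cdot\frac{\sinh(\pi z/2)}{\sin(\pi z/2)}=\tanh(\pi z/2)\cot(\pi z/2),
\]
which is exactly the claimed generating function.

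The substance of the argument is already contained in Theorems~\ref{T1} and~\ref{T2}; what remains is bookkeeping together with standard product expansions. The only steps needing a word of care — and the closest thing to an obstacle — are justifying the rearrangement of the doubly-infinite product into its odd and even parts (legitimate by absolute convergence on $|z|<1$) and checking that the $d=0$ contributions agree on the two sides. I do not anticipate a genuine difficulty.
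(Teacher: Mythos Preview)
Your proof is correct and follows essentially the same approach as the paper: you invoke Theorems~\ref{T1} and~\ref{T2} to reduce the left-hand side to the product $\prod_{k\ge1}(1+(-1)^kz^2/k^2)(1-(-1)^kz^2/k^2)^{-1}$ and then evaluate it via the standard Weierstrass products for $\sin$, $\cos$, $\sinh$, $\cosh$. The only cosmetic difference is that you split the product by parity of $k$ at the outset, whereas the paper first separates the numerator and denominator products before splitting each by parity; the resulting trigonometric factors are identical.
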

\begin{proof}  By Theorem \ref{T1} and Theorem \ref{T2}, we have
\begin{equation*}
\begin{split}
\sum_{d=0}^{\infty}&\zeta^{\star}(\{3,1\}^d)\,z^{4d} -\sum_{d=0}^{\infty}\zeta^{\star}(\{3,1\}^d,2)\,z^{4d+2} \\
&=\sum_{d=0}^{\infty}z^{4d}\left(\sum_{r=1}^{2d}2^r\cdot A(2,2d,r)\right) + \sum_{d=0}^{\infty}z^{4d+2}\left(\sum_{r=1}^{2d+1}2^r\cdot A(2, 2d+1,r)\right) \\
&=\sum_{n=0}^{\infty}z^{2n}\left(\sum_{r=1}^n2^r\cdot A(2,n,r)\right) \\
&=\prod_{k=1}^{\infty}\left(1+\frac{(-1)^kz^2}{k^2}\right)\left(1-\frac{(-1)^kz^2}{k^2}\right)^{-1}.
\end{split}
\end{equation*}
Now evaluate the infinite products to see that
\begin{equation*}
\begin{split}
\prod_{k=1}^{\infty}\left(1+\frac{(-1)^kz^2}{k^2}\right)&=\prod_{j=1}^{\infty}\left(1+\frac{z^2}{(2j)^2}\right)\cdot\prod_{j=1}^{\infty}\left(1-\frac{z^2}{(2j-1)^2}\right) \\
&=\prod_{j=1}^{\infty}\left(1+\frac{(z/2)^2}{j^2}\right)\cdot\prod_{j=1}^{\infty}\left(1-\frac{z^2}{j^2}\right)\prod_{j=1}^{\infty}\left(1-\frac{z^2}{(2j)^2}\right)^{-1} \\
&=\frac{\sin(\pi iz/2)}{\pi iz/2}\cdot \frac{\sin(\pi z)}{\pi z}\cdot \frac{\pi z/2}{\sin(\pi z/2)} \\
&=\frac{\sinh(\pi z/2)}{\pi z/2}\cdot\cos(\pi z/2).
\end{split}
\end{equation*}
Similarly, we have
\begin{equation*}
\prod_{k=1}^{\infty}\left(1-\frac{(-1)^kz^2}{k^2}\right)^{-1}=\frac{\pi iz/2}{\sinh(\pi iz/2)}\cdot \frac{1}{\cos(\pi iz/2)}=\frac{\pi z/2}{\sin(\pi z/2)\cosh(\pi z/2)}.
\end{equation*}
Therefore, the resulting product is
$$
\frac{\sinh(\pi z/2)}{\pi z/2}\cdot \cos(\pi z/2)\cdot\frac{\pi z/2}{\sin(\pi z/2) \cosh(\pi z/2)} = \tanh(\pi z/2)\cdot\cot(\pi z/2).
$$
\end{proof}
Recall the definition of the Bernoulli numbers, which are rational numbers given by the generating function
$$
\frac{t}{e^t-1}=\sum_{s=0}^{\infty} B_s\frac{t^s}{s!}.
$$
The first few values are 
$$
B_0=1, \quad B_1=-\frac{1}{2}, \quad B_2=\frac{1}{6}, \ldots,
$$
and $B_{2k+1}=0$ for all integers $k\ge 1$.
The values of the Riemann zeta function at even positive integers can be easily evaluated in terms of $\pi$ and Bernoulli numbers:
\begin{equation} \label{eq02}
\zeta(2k)=(-1)^{k+1}\frac{B_{2k}}{2(2k)!}\,(2\pi)^{2k},  \quad\qquad\qquad k=0,1,2,\ldots,
\end{equation}
where by convention, $\zeta(0)=-1/2$.

\subsection{Proof of Theorem \ref{T4}}
\begin{proof} The formulas easily follow from Theorem \ref{T3}. Using the well-known power series expansions
\begin{equation} \label{series}
\begin{split}
\tanh(\pi z/2)&=\frac{4}{\pi z}\sum_{n=1}^{\infty}(-1)^{n+1}(4^n-1)\zeta(2n)\,\frac{z^{2n}}{4^n}, \quad\qquad |z|<1, \\
\cot(\pi z/2)&=-\frac{4}{\pi z} \sum_{n=0}^{\infty}\zeta(2n)\,\frac{z^{2n}}{4^n}, \quad\quad \qquad\qquad\qquad\quad\, |z|<1,
\end{split}
\end{equation}
and multiplying the series, we get
\begin{equation*}
\begin{split}
\tanh(\pi z/2)&\cdot\cot(\pi z/2) = \frac{4}{\pi^2}\sum_{n=0}^{\infty}(-1)^{n+1}(4^{n+1}-1)\zeta(2n+2)\,\frac{z^{2n}}{4^n}\times\sum_{n=0}^{\infty}
\zeta(2n)\,\frac{z^{2n}}{4^n} \\
&=\frac{4}{\pi^2}\sum_{n_1=0}^{\infty}\sum_{n_2=0}^{\infty}(-1)^{n_1+1}(4^{n_1+1}-1)\zeta(2n_1+2)\zeta(2n_2)\,\frac{z^{2(n_1+n_2)}}{4^{n_1+n_2}} \\[2pt]
&=\frac{4}{\pi^2}\sum_{n=0}^{\infty}\frac{z^{2n}}{4^n}\underset{n_1, n_2\ge 0}{\sum_{n_1+n_2=n}}(-1)^{n_1+1}(4^{n_1+1}-1)\zeta(2n_1+2)\zeta(2n_2).
\end{split}
\end{equation*}
Now by Theorem \ref{T3}, extracting coefficients of $z^{4d}$ and $z^{4d+2}$, we obtain
\begin{equation*}
\begin{split}
\zeta^\star(\{3,1\}^d) &= \frac{1}{4^{2d-1}\pi^{2}}\cdot\sum_{k=0}^{2d}(-1)^{k+1}(4^{k+1}-1)\zeta(2k+2)\zeta(4d-2k), \\
\zeta^\star(\{3,1\}^d,2) &= \frac{1}{4^{2d}\pi^{2}}\cdot\sum_{k=0}^{2d+1}(-1)^k(4^{k+1}-1)\zeta(2k+2)\zeta(4d+2-2k).
\end{split}
\end{equation*}
Finally, substituting expression (\ref{eq02}) in terms of $\pi$ and Bernoulli numbers,  we get the desired formulas.  
\end{proof}
\begin{corollary}
For any non-negative integer $d$,
\begin{align*}
\zeta^\star(\{3,1\}^d)&=\quad\sum_{k=0}^{2d}\zeta(\{\overline{2}\}^k)\zeta^\star(\{\overline{2}\}^{2d-k}), \\
\zeta^\star(\{3,1\}^d,2)&=-\sum_{k=0}^{2d+1}\zeta(\{\overline{2}\}^k)\zeta^\star(\{\overline{2}\}^{2d+1-k}).
\end{align*}
\end{corollary}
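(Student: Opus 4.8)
The plan is to observe that the two infinite products which were evaluated in the course of proving Theorem~\ref{T3} are \emph{exactly} the generating functions of the alternating Euler sums $\zeta(\{\overline 2\}^k)$ and of the alternating star values $\zeta^{\star}(\{\overline 2\}^k)$, and then to obtain the two identities by forming the Cauchy product and comparing Taylor coefficients.

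First I would record the two generating‑function identities. Interpreting
\[
\zeta(\{\overline 2\}^k)=\sum_{n_1>\cdots>n_k\ge 1}\prod_{j=1}^{k}\frac{(-1)^{n_j}}{n_j^{2}}
\]
as the $k$‑th elementary symmetric function of the sequence $\bigl((-1)^n/n^2\bigr)_{n\ge 1}$ gives
\[
\sum_{k=0}^{\infty}\zeta(\{\overline 2\}^k)\,z^{2k}=\prod_{n=1}^{\infty}\left(1+\frac{(-1)^n z^2}{n^2}\right),
\]
while interpreting
\[
\zeta^{\star}(\{\overline 2\}^k)=\sum_{n_1\ge\cdots\ge n_k\ge 1}\prod_{j=1}^{k}\frac{(-1)^{n_j}}{n_j^{2}}
\]
as the $k$‑th complete homogeneous symmetric function of the same sequence gives
\[
\sum_{k=0}^{\infty}\zeta^{\star}(\{\overline 2\}^k)\,z^{2k}=\prod_{n=1}^{\infty}\left(1-\frac{(-1)^n z^2}{n^2}\right)^{-1}.
\]
For $|z|<1$ both series converge absolutely (they are dominated termwise by $\sum_k\zeta(\{2\}^k)|z|^{2k}$ and $\sum_k\zeta^{\star}(\{2\}^k)|z|^{2k}$, respectively), so their Cauchy product is legitimate there.

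It then remains to multiply. By the computation already carried out inside the proof of Theorem~\ref{T3}, the product of the two infinite products above equals $\tanh(\pi z/2)\cot(\pi z/2)$, which by Theorem~\ref{T3} equals $\sum_{d\ge 0}\zeta^{\star}(\{3,1\}^d)z^{4d}-\sum_{d\ge 0}\zeta^{\star}(\{3,1\}^d,2)z^{4d+2}$. On the other hand the Cauchy product of the two generating functions above is $\sum_{N\ge 0}z^{2N}\sum_{k=0}^{N}\zeta(\{\overline 2\}^k)\zeta^{\star}(\{\overline 2\}^{N-k})$. Equating coefficients of $z^{2N}$ and taking $N=2d$, respectively $N=2d+1$, yields the first, respectively the second, stated formula, the minus sign in the second coming from the sign attached to the $z^{4d+2}$ term in Theorem~\ref{T3}. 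The argument is essentially bookkeeping; the only points needing a little care are identifying $\zeta^{\star}(\{\overline 2\}^k)$ correctly with the complete homogeneous symmetric function, so that the sign $(-1)^{n_j}$ is attached to the summation index and is handled consistently on repeated indices, and justifying the two rearrangements (the Cauchy product and the passage from the double product to the factored trigonometric form), both of which hold on $|z|<1$. I do not foresee a genuine obstacle.
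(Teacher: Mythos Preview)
Your proposal is correct and follows essentially the same route as the paper's proof: identify the two infinite products appearing in the proof of Theorem~\ref{T3} as the generating functions of $\zeta(\{\overline 2\}^k)$ and $\zeta^{\star}(\{\overline 2\}^k)$, form the Cauchy product, and compare coefficients of $z^{4d}$ and $z^{4d+2}$. The only cosmetic difference is that you pass through the closed form $\tanh(\pi z/2)\cot(\pi z/2)$ before invoking Theorem~\ref{T3}, whereas the paper uses directly the equality (established inside the proof of Theorem~\ref{T3}) between the generating series and the infinite product, bypassing the trigonometric expression entirely.
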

\begin{proof}
From the proof of Theorem \ref{T3} we have
\begin{equation*}
F(z):=\sum_{d=0}^{\infty}\zeta^\star(\{3,1\}^d)z^{4d}-\sum_{d=0}^{\infty}\zeta^\star(\{3,1\}^d,2)z^{4d+2}=
\prod_{k=1}^{\infty}\left(1+\frac{(-1)^kz^2}{k^2}\right)\left(1-\frac{(-1)^kz^2}{k^2}\right)^{-1}.
\end{equation*}
Since the first infinite product is the generating function for the sequence $\zeta(\{\overline{2}\}^k)$, and the second one
is the generating function for the sequence $\zeta^{\star}(\{\overline{2}\}^k)$, we have
\begin{align*}
\prod_{k=1}^{\infty}\left(1+\frac{(-1)^kz^2}{k^2}\right) &= \sum_{k=0}^{\infty}\zeta(\{\overline{2}\}^k)z^{2k}, \\
\prod_{k=1}^{\infty}\left(1-\frac{(-1)^kz^2}{k^2}\right)^{-1} &= \sum_{k=0}^{\infty}\zeta^{\star}(\{\overline{2}\}^k)z^{2k}.
\end{align*}
Thus,
\begin{equation*}
F(z)=\sum_{k=0}^{\infty}\zeta(\{\overline{2}\}^k)z^{2k}\cdot   \sum_{l=0}^{\infty}\zeta^{\star}(\{\overline{2}\}^l)z^{2l}
=\sum_{n=0}^{\infty}z^{2n}\underset{k, l\ge 0}{\sum_{k+l=n}}\zeta(\{\overline{2}\}^k)\zeta^{\star}(\{\overline{2}\}^l),
\end{equation*}
and the formulas follow by comparing coefficients of powers of $z$.
\end{proof}

\section{Explicit evaluation of $\zeta^{\star}(\{\{2\}^m,3,\{2\}^m,1\}^d)$ and $\zeta^{\star}(\{\{2\}^m,3,\{2\}^m,1\}^d, \{2\}^{m+1})$.} \label{S3}

In this section, we show that Theorem \ref{T2} is easily generalizable, which allows us to obtain explicit expressions for multiple zeta star 
values from the title.

\begin{theorem} \label{T5}
For any non-negative integers $d$ and $m$,
\begin{align*}
\zeta^{\star}(\{\{2\}^m,3,\{2\}^m,1\}^d) &= \quad\sum_{r=1}^{2d}2^r\cdot A(2m+2, 2d, r), \\
\zeta^{\star}(\{\{2\}^m,3,\{2\}^m,1\}^d, \{2\}^{m+1}) &= -\sum_{r=1}^{2d+1}2^r\cdot A(2m+2, 2d+1, r).
\end{align*}
\end{theorem}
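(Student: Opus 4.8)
The plan is to imitate, essentially verbatim, the proof of Theorem \ref{T2}, with the exponent $2$ replaced by $2m+2$ throughout. The one external input needed is the $m$-generalized form of the iterated-sum representations of \cite{THP:2018}: for non-negative integers $d,m$,
\begin{equation*}
\zeta^{\star}(\{\{2\}^m,3,\{2\}^m,1\}^d)=\sum_{k_1\ge k_2\ge\cdots \ge k_{2d}\ge 1}\prod_{j=1}^{2d}\frac{(-1)^{k_j}\,2^{\Delta(k_{j-1}, k_j)}}{k_j^{2m+2}}, \qquad k_0=0,
\end{equation*}
and
\begin{equation*}
\zeta^{\star}(\{\{2\}^m,3,\{2\}^m,1\}^d,\{2\}^{m+1})=-\sum_{k_0\ge k_1\ge\cdots \ge k_{2d}\ge 1}\prod_{j=0}^{2d}\frac{(-1)^{k_j}\,2^{\Delta(k_{j-1}, k_j)}}{k_j^{2m+2}}, \qquad k_{-1}=0,
\end{equation*}
where $\Delta$ is the function introduced in the proof of Theorem \ref{T2}. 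For $m=0$ these are exactly the two identities quoted there from \cite[Theorems 1.3 and 1.6]{THP:2018}, and for general $m$ they come from the same iterated-integral/partial-fraction derivation: a block $\{2\}^m,3,\{2\}^m,1$ has weight $4(m+1)$ and contributes precisely two factors $1/k^{2m+2}$, while the weights $2^{\Delta(k_{j-1},k_j)}$ record how equal neighbouring summation indices merge in passing from the star value to the diagonal sum. Granting this, the remainder is pure bookkeeping.

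First I would handle the first identity. Since $k_0=0<1\le k_1$, the factor $2^{\Delta(k_0,k_1)}=2$ is forced, so
\begin{equation*}
\zeta^{\star}(\{\{2\}^m,3,\{2\}^m,1\}^d)=2\sum_{k_1\ge k_2\ge\cdots \ge k_{2d}\ge 1}\frac{(-1)^{k_1+\cdots+k_{2d}}}{(k_1k_2\cdots k_{2d})^{2m+2}}\cdot 2^{\sum_{j=2}^{2d}\Delta(k_{j-1},k_j)}.
\end{equation*}
I would then classify each chain $k_1\ge\cdots\ge k_{2d}$ by the number $r$ of distinct values it takes, $1\le r\le 2d$: these values form a strictly decreasing tuple $k_1'>\cdots>k_r'\ge 1$ with run-lengths $s_1,\ldots,s_r\ge 1$ satisfying $s_1+\cdots+s_r=2d$, and then $\sum_{j=2}^{2d}\Delta(k_{j-1},k_j)=r-1$, so the power of $2$ collapses to $2^r$. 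Collecting terms yields
\begin{equation*}
\zeta^{\star}(\{\{2\}^m,3,\{2\}^m,1\}^d)=\sum_{r=1}^{2d}2^{r}\underset{s_1,\ldots,s_r\ge 1}{\sum_{s_1+\cdots+s_r=2d}}\ \sum_{k_1'>\cdots>k_r'\ge 1}\frac{(-1)^{s_1k_1'}\cdots(-1)^{s_rk_r'}}{(k_1')^{(2m+2)s_1}\cdots(k_r')^{(2m+2)s_r}},
\end{equation*}
and the innermost double sum is exactly $\zeta\bigl((2m+2)s_1,\ldots,(2m+2)s_r;(-1)^{s_1},\ldots,(-1)^{s_r}\bigr)$; summing over the compositions of $2d$ turns the middle sum into $A(2m+2,2d,r)$ as defined in (\ref{aa}), which is the first claimed formula.

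The second identity is treated identically, now with the $2d+1$ monotone variables $k_0\ge k_1\ge\cdots\ge k_{2d}$: the factor $2^{\Delta(k_{-1},k_0)}=2$ is again forced, the number $r$ of distinct values ranges over $1\le r\le 2d+1$, the run-lengths satisfy $s_1+\cdots+s_r=2d+1$, and carrying the overall sign along gives $-\sum_{r=1}^{2d+1}2^{r}A(2m+2,2d+1,r)$. I expect no real obstacle in the combinatorial regrouping, which is word-for-word the one in Theorem \ref{T2} (in particular it works for $d\ge 1$ just as there). The only point that genuinely requires attention is the $m$-generalized \cite{THP:2018} representation displayed above — specifically, that the weight-$4(m+1)$ block $\{2\}^m,3,\{2\}^m,1$ contributes two factors $1/k^{2m+2}$ (rather than, say, a single $1/k^{4(m+1)}$ or factors $1/k^{m+2}$). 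This one verifies either by re-running the argument of \cite{THP:2018} or by testing a small case such as $\zeta^{\star}(2,3,2,1)=2\zeta(8)+4\zeta(\overline{4},\overline{4})$, which agrees with the value $\tfrac{49}{259200}\pi^{8}$ predicted later by the corollary to Theorem \ref{Bell}.
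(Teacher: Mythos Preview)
Your proposal is correct and follows essentially the same approach as the paper: invoke the iterated-sum representations from \cite{THP:2018} with exponent $2m+2$ in place of $2$, then regroup the weakly decreasing sum by the number $r$ of distinct values (equivalently, by the exponent of $2$), recognizing the inner sums as $A(2m+2,2d,r)$ and $A(2m+2,2d+1,r)$. The paper does exactly this, citing \cite[Theorem~1.6]{THP:2018} directly for the general-$m$ input rather than treating it as a step to be verified.
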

\begin{proof}
From \cite[Theorem 1.6]{THP:2018} we have
$$
\zeta^{\star}(\{\{2\}^m,3,\{2\}^m,1\}^d)=\sum_{k_1\ge\cdots\ge k_{2d}\ge 1} \prod_{j=1}^{2d}\frac{(-1)^{k_j} 2^{\Delta(k_{j-1}, k_j)}}{k_j^{2m+2}},
$$
where $k_{-1}=0$. Grouping in powers of 2, we obtain
\begin{equation*}
\begin{split}
\zeta^{\star}(\{\{2\}^m, 3,  & \{2\}^m, 1\}^d) = 2\sum_{k_1\ge\cdots\ge k_{2d}\ge 1}\frac{(-1)^{k_1+\cdots+k_{2d}}\, 2^{\sum_{j=2}^{2d}\Delta(k_{j-1}, k_j)}}{k_1^{2m+2}
\cdots k_{2d}^{2m+2}} \\
&=
\sum_{r=1}^{2d}2^r\underset{s_1, \ldots, s_r\ge 1}{\sum_{s_1+\cdots+s_r=2d}} \, \,\sum_{k_1>\cdots >k_{r}\ge 1} \,
\frac{(-1)^{k_1s_1}\cdots (-1)^{k_rs_r}}{k_1^{(2m+2)s_1}\cdots k_r^{(2m+2)s_r}} \\
&=\sum_{r=1}^{2d}2^r\cdot A(2m+2, 2d, r).
\end{split}
\end{equation*}
Similarly, from  \cite[Theorem 1.6]{THP:2018} we have
\begin{equation*}
\begin{split}
\zeta^{\star}(\{\{2\}^m,3,\{2\}^m,1\}^d, & \{2\}^{m+1}) = -2\sum_{k_0\ge k_1\ge \cdots\ge k_{2d}\ge 1}\, 
\frac{(-1)^{k_0+k_1+\cdots+k_{2d}} \, 2^{\sum_{j=1}^{2d}\Delta(k_{j-1}, k_j)}}{k_0^{2m+2}k_1^{2m+2}\cdots k_{2d}^{2m+2}} \\
&=-\sum_{r=1}^{2d+1}2^r\cdot A(2m+2, 2d+1, r).
\end{split}
\end{equation*}
\end{proof}
\begin{theorem} \label{T6}
For $|z|<1$ and any non-negative integer $m$, we have
\begin{equation*}
\begin{split}
\sum_{d=0}^{\infty}\zeta^{\star}(\{\{2\}^{m}, 3,  \{2\}^{m}, 1\}^d) \,& z^{4(m+1)d} - \sum_{d=0}^{\infty}\zeta^{\star}(\{\{2\}^{m},3,\{2\}^{m},1\}^d,  \{2\}^{m+1})\, 
z^{2(m+1)(2d+1)}  \\
&= -i\prod_{k=0}^{m}\tan\left(\frac{\pi z}{2}\,e^{\frac{\pi i(2k+1)}{2(m+1)}}\right)\cot\biggl(\frac{\pi z}{2}\, e^{\frac{\pi ik}{m+1}}\biggr).
\end{split}
\end{equation*}
\end{theorem}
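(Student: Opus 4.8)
The plan is to reduce the identity to an infinite product using Theorem~\ref{T5} and Theorem~\ref{T1}, and then to evaluate that product by factoring $1\pm x^{2(m+1)}$ over roots of unity and recognising the classical Weierstrass products for $\sin$ and $\cos$. Since $4(m+1)d=(2m+2)(2d)$ and $2(m+1)(2d+1)=(2m+2)(2d+1)$, Theorem~\ref{T5} (the minus sign before the second series on the left cancelling the one in that theorem) shows that the left-hand side equals
\[
\sum_{d=0}^{\infty}\Bigl(\sum_{r=1}^{2d}2^r A(2m+2,2d,r)\Bigr)z^{(2m+2)(2d)}+\sum_{d=0}^{\infty}\Bigl(\sum_{r=1}^{2d+1}2^r A(2m+2,2d+1,r)\Bigr)z^{(2m+2)(2d+1)},
\]
that is, $\sum_{n=0}^{\infty}\bigl(\sum_{r=1}^{n}2^r A(2m+2,n,r)\bigr)z^{(2m+2)n}$, the $n=0$ term being $1$ (the value of $\zeta^\star$ at the empty index). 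By Theorem~\ref{T1} with $\lambda=1$ and $m$ replaced by $2m+2$, this power series equals
\[
P(z):=\prod_{k=1}^{\infty}\Bigl(1+\frac{(-1)^k z^{2m+2}}{k^{2m+2}}\Bigr)\Bigl(1-\frac{(-1)^k z^{2m+2}}{k^{2m+2}}\Bigr)^{-1}.
\]

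It remains to evaluate $P(z)$. Put $N=2(m+1)$ and split the two products over $k$ into their even and odd parts; since $1+(-1)^k x^N$ is $1+x^N$ for $k$ even and $1-x^N$ for $k$ odd,
\[
P(z)=\frac{\prod_{j\ge1}\bigl(1+\tfrac{z^N}{(2j)^N}\bigr)\,\prod_{j\ge1}\bigl(1-\tfrac{z^N}{(2j-1)^N}\bigr)}{\prod_{j\ge1}\bigl(1-\tfrac{z^N}{(2j)^N}\bigr)\,\prod_{j\ge1}\bigl(1+\tfrac{z^N}{(2j-1)^N}\bigr)}.
\]
Now $1+x^N=\prod_{l=0}^{N-1}(1-x\,e^{\pi i(2l+1)/N})$ and $1-x^N=\prod_{l=0}^{N-1}(1-x\,e^{2\pi i l/N})$; pairing $l$ with $l+\tfrac N2=l+m+1$ (which partitions $\{0,\dots,N-1\}$ into $m+1$ pairs and sends the corresponding root to its negative) turns these into $1+x^N=\prod_{l=0}^{m}(1-x^2 e^{2\pi i(2l+1)/N})$ and $1-x^N=\prod_{l=0}^{m}(1-x^2 e^{4\pi i l/N})$. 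Substituting $x=z/(2j)$ and $x=z/(2j-1)$, interchanging the finite $l$-product with the $j$-product, and applying $\prod_{j\ge1}(1-w^2/j^2)=\sin(\pi w)/(\pi w)$ together with $\prod_{j\ge1}(1-w^2/(2j-1)^2)=\cos(\pi w/2)$, one finds
\[
P(z)=\prod_{l=0}^{m}\frac{\beta_l}{\alpha_l}\,\tan\alpha_l\,\cot\beta_l,\qquad
\alpha_l=\frac{\pi z}{2}e^{\frac{\pi i(2l+1)}{2(m+1)}},\quad
\beta_l=\frac{\pi z}{2}e^{\frac{\pi i l}{m+1}}.
\]
As $\beta_l/\alpha_l=e^{-\pi i/(2(m+1))}$ does not depend on $l$, the prefactor $\prod_{l=0}^{m}\beta_l/\alpha_l$ equals $e^{-\pi i/2}=-i$, which gives the asserted formula.

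The step that demands the most care is the root-of-unity bookkeeping: checking that $l\leftrightarrow l+m+1$ is a partition, that it produces the exponents $e^{2\pi i(2l+1)/N}$ and $e^{4\pi i l/N}$, and that the resulting sines and cosines match $\alpha_l$ and $\beta_l$ after the substitution $x=z/(2j)$ or $x=z/(2j-1)$. One must also justify the rearrangements of the infinite products, which is legitimate since each of the four $j$-products converges (its $j$-th factor is $1+O(j^{-N})$ with $N\ge2$) while only finitely many $l$-factors are extracted; and one should note that for $0<|z|<1$ the arguments of all sines in the denominators have modulus $<\pi/2$ and so do not vanish, while the zeros of the $\tan\alpha_l$ cancel the poles of the $\cot\beta_l$ at $z=0$, so $P(z)$ is holomorphic in $|z|<1$ and the identity holds there.
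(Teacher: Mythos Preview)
Your proof is correct and follows essentially the same route as the paper: reduce via Theorems~\ref{T5} and~\ref{T1} to the infinite product, split into even and odd $k$, factor $1\pm x^{2(m+1)}$ over roots of unity, and apply the Weierstrass products. The only cosmetic difference is that you invoke the cosine product $\prod_{j\ge1}\bigl(1-w^2/(2j-1)^2\bigr)=\cos(\pi w/2)$ directly, whereas the paper rewrites the odd-index products as ratios of sine products and then applies the double-angle formula to reach $\tan$ and $\cot$; your version is marginally more streamlined but otherwise identical in substance.
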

\begin{proof}
By Theorem \ref{T5} and Theorem \ref{T1}, for any positive integer $m$, we have
\begin{equation} \label{eq03}
\begin{split}
\sum_{d=0}^{\infty}\zeta^{\star}(\{&\{2\}^{m-1},  3,  \{2\}^{m-1}, 1\}^d) \, z^{4md} - \sum_{d=0}^{\infty}\zeta^{\star}(\{\{2\}^{m-1},3,\{2\}^{m-1},1\}^d,  \{2\}^{m})\, 
z^{2m(2d+1)}  \\
&=\sum_{d=0}^{\infty}z^{4md}\sum_{r=1}^{2d}2^r\cdot A(2m, 2d, r) + \sum_{d=0}^{\infty}z^{2m(2d+1)}\sum_{r=1}^{2d+1}2^r\cdot A(2m, 2d+1, r) \\
&=\sum_{n=0}^{\infty}z^{2mn}\sum_{r=1}^n2^r\cdot A(2m,n,r)
=\prod_{k=1}^{\infty}\left(1+\frac{(-1)^k z^{2m}}{k^{2m}}\right)\left(1-\frac{(-1)^k z^{2m}}{k^{2m}}\right)^{-1}.
\end{split}
\end{equation}
Further factoring gives
\begin{equation*}
\begin{split}
\prod_{k=1}^{\infty}\left(1+\frac{(-1)^k z^{2m}}{k^{2m}}\right) & = \prod_{j=1}^{\infty}\left(1+\frac{z^{2m}}{(2j)^{2m}}\right)\left(1-\frac{z^{2m}}{(2j-1)^{2m}}\right) \\
 &=\prod_{j=1}^{\infty}\left(1+\frac{z^{2m}}{(2j)^{2m}}\right)\left(1-\frac{z^{2m}}{j^{2m}}\right)\left(1-\frac{z^{2m}}{(2j)^{2m}}\right)^{-1} \\
 &=\prod_{j=1}^{\infty}\prod_{k=0}^{m-1}\left(1-e^{\frac{\pi i(2k+1)}{m}}\frac{z^2}{4j^2}\right)\left(1-e^{\frac{2\pi ik}{m}}\frac{z^2}{j^2}\right)
 \left(1-e^{\frac{2\pi ik}{m}}\frac{z^2}{4j^2}\right)^{-1}.
\end{split}
\end{equation*}
Similarly,
\begin{equation*}
\begin{split}
\prod_{k=1}^{\infty}\left(1-\frac{(-1)^k z^{2m}}{k^{2m}}\right)^{-1} & = \prod_{j=1}^{\infty}\left(1-\frac{z^{2m}}{(2j)^{2m}}\right)^{-1}
\left(1+\frac{z^{2m}}{(2j-1)^{2m}}\right)^{-1} \\
 &=\prod_{j=1}^{\infty}\left(1-\frac{z^{2m}}{(2j)^{2m}}\right)^{-1}\left(1+\frac{z^{2m}}{j^{2m}}\right)^{-1}\left(1+\frac{z^{2m}}{(2j)^{2m}}\right) \\
 &=\prod_{j=1}^{\infty}\prod_{k=0}^{m-1}\left(1-e^{\frac{2\pi ik}{m}}\frac{z^2}{4j^2}\right)^{-1}\left(1-e^{\frac{\pi i(2k+1)}{m}}\frac{z^2}{j^2}\right)^{-1}
 \left(1-e^{\frac{\pi i(2k+1)}{m}}\frac{z^2}{4j^2}\right).
\end{split}
\end{equation*}
Hence, the right-hand side of (\ref{eq03}) becomes
\begin{equation*}
{\rm RHS}=\prod_{j=1}^{\infty}\prod_{k=0}^{m-1}\left(1-e^{\frac{2\pi ik}{m}}\frac{z^2}{4j^2}\right)^{-2}\left(1-e^{\frac{\pi i(2k+1)}{m}}\frac{z^2}{4j^2}\right)^{2}
 \left(1-e^{\frac{\pi i(2k+1)}{m}}\frac{z^2}{j^2}\right)^{-1}\left(1-e^{\frac{2\pi ik}{m}}\frac{z^2}{j^2}\right).
\end{equation*}
Using the infinite product for the sine function, we get
\begin{equation*}
\begin{split}
{\rm RHS}&=\prod_{k=0}^{m-1}\left(\frac{\sin\left(\frac{\pi z}{2} e^{\frac{\pi ik}{m}}\right)}{\frac{\pi z}{2} e^{\frac{\pi ik}{m}}}\right)^{-2}
\left(\frac{\sin\left(\frac{\pi z}{2} e^{\frac{\pi i(2k+1)}{2m}}\right)}{\frac{\pi z}{2} e^{\frac{\pi i(2k+1)}{2m}}}\right)^{2}
\left(\frac{\sin\left(\pi z e^{\frac{\pi i(2k+1)}{2m}}\right)}{\pi z e^{\frac{\pi i(2k+1)}{2m}}}\right)^{-1}
\left(\frac{\sin\left(\pi z e^{\frac{\pi ik}{m}}\right)}{\pi z e^{\frac{\pi ik}{m}}}\right) \\
&= -i\prod_{k=0}^{m-1}\left[\frac{\left(\sin\left(\frac{\pi z}{2} e^{\frac{\pi ik}{m}}\right)\right)^{-2} \left(\sin\left(\frac{\pi z}{2} e^{\frac{\pi i(2k+1)}{2m}}\right)\right)^2}{2
\sin\left(\frac{\pi z}{2} e^{\frac{\pi i(2k+1)}{2m}}\right) \cos\left(\frac{\pi z}{2} e^{\frac{\pi i(2k+1)}{2m}}\right)}
\cdot 2\sin\left(\frac{\pi z}{2} e^{\frac{\pi ik}{m}}\right) \cos\left(\frac{\pi z}{2} e^{\frac{\pi ik}{m}}\right)\right] \\[2pt]
&= -i\prod_{k=0}^{m-1}\frac{\sin\left(\frac{\pi z}{2} e^{\frac{\pi i(2k+1)}{2m}}\right)}{\sin\left(\frac{\pi z}{2} e^{\frac{\pi ik}{m}}\right)}\cdot
\frac{\cos\left(\frac{\pi z}{2} e^{\frac{\pi ik}{m}}\right)}{\cos\left(\frac{\pi z}{2} e^{\frac{\pi i(2k+1)}{2m}}\right)}
=-i\prod_{k=0}^{m-1}\tan\left(\frac{\pi z}{2} e^{\frac{\pi i(2k+1)}{2m}}\right)\cot\biggl(\frac{\pi z}{2} e^{\frac{\pi ik}{m}}\biggr).
\end{split}
\end{equation*}
Now replacing $m$ by $m+1$, we  conclude the proof.
\end{proof}

\subsection{Proof of Theorem \ref{T7}}
\begin{proof}
By Theorem \ref{T6} ,   the formulas follow from the power series expansion of the product
$$
\Pi:=-i\prod_{k=0}^m\tan\left(\frac{\pi z}{2}\,e^{\frac{\pi i(2k+1)}{2(m+1)}}\right)\cot\left(\frac{\pi z}{2}\,e^{\frac{\pi ik}{m+1}}\right).
$$
Using the fact that $\tan(z)=-i\tanh(iz)$ and applying formulas (\ref{series}), we get
\begin{equation*}
\begin{split}
\tan&\left(\frac{\pi z}{2}\,e^{\frac{\pi i(2k+1)}{2(m+1)}}\right)\cdot\cot\left(\frac{\pi z}{2}\,e^{\frac{\pi ik}{m+1}}\right) \\
&\qquad\qquad=-\frac{4}{\pi^2}\, e^{\frac{\pi i}{2(m+1)}}\sum_{n=0}^{\infty}
\frac{4^{n+1}-1}{4^n}\zeta(2n+2)e^{\frac{\pi i(2k+1)n}{m+1}}z^{2n}\cdot\sum_{n=0}^{\infty}\frac{\zeta(2n)}{4^n}\,e^{\frac{2\pi ikn}{m+1}}z^{2n} \\
&\qquad\qquad=-\frac{4}{\pi^2}\, e^{\frac{\pi i}{2(m+1)}}\sum_{n=0}^{\infty}
\frac{z^{2n}}{4^n}\,e^{\frac{2\pi ikn}{m+1}}\sum_{l=0}^{n}(4^{l+1}-1)\zeta(2l+2)\zeta(2n-2l) e^{\frac{\pi il}{m+1}}. 
\end{split}
\end{equation*}
Therefore, expanding the product, we have
\begin{equation*}
\begin{split}
\Pi&=\frac{-i (-4)^{m+1}}{\pi^{2(m+1)}}\,e^{\frac{\pi i}{2}}\prod_{k=0}^m\left(\sum_{n=0}^{\infty}\frac{z^{2n}}{4^n}\,e^{\frac{2\pi ikn}{m+1}}\sum_{l=0}^n(4^{l+1}-1)\zeta(2l+2)\zeta(2n-2l)e^{\frac{\pi il}{m+1}}\right) \\
&=\frac{(-1)^{m+1}4^{m+1}}{\pi^{2(m+1)}}\sum_{n_0=0}^{\infty}\ldots\sum_{n_m=0}^{\infty}\frac{z^{2(n_0+\cdots+n_m)}}{4^{n_0+\cdots+n_m}}\exp\left(\frac{2\pi i}{m+1}\sum_{k=0}^mkn_k\right) \\
&\quad\times\prod_{k=0}^m\,\sum_{l_k=0}^{n_k}(4^{l_k+1}-1)\zeta(2l_k+2)\zeta(2n_k-2l_k)e^{\frac{\pi il_k}{m+1}} \\
&=\frac{(-1)^{m+1}4^{m+1}}{\pi^{2(m+1)}}\sum_{n=0}^{\infty}\frac{z^{2n}}{4^n}\underset{n_0, \ldots, n_m\ge 0}{\sum_{n_0+\cdots+n_m=n}}
\exp\left(\frac{2\pi i}{m+1}\sum_{k=0}^mkn_k\right) \\
&\quad\times\prod_{k=0}^m\,\sum_{l_k=0}^{n_k}(4^{l_k+1}-1)\zeta(2l_k+2)\zeta(2n_k-2l_k)e^{\frac{\pi il_k}{m+1}}.
\end{split}
\end{equation*}
Substituting formulas (\ref{eq02}), we obtain
\begin{equation*}
\begin{split}
\Pi=4^{m+1}\sum_{n=0}^{\infty}(-1)^n\pi^{2n}z^{2n}\underset{n_0, \ldots, n_m\ge 0}{\sum_{n_0+\cdots+n_m=n}}e^{\frac{2\pi i}{m+1}\sum\limits_{k=0}^mkn_k}
\prod_{k=0}^m\,\sum_{l_k=0}^{n_k}(4^{l_k+1}-1)e^{\frac{\pi il_k}{m+1}}\frac{B_{2l_k+2}}{(2l_k+2)!}\frac{B_{2n_k-2l_k}}{(2n_k-2l_k)!}.
\end{split}
\end{equation*}
Now comparing coefficients of powers of $z$ for $n=2d(m+1)$ and $n=(2d+1)(m+1)$, by Theorem \ref{T6}, we get the required formulas.
\end{proof}

We can give alternative evaluations of $\zeta^{\star}(\{\{2\}^m, 3, \{2\}^m, 1\}^d)$, $\zeta^{\star}(\{\{2\}^m, 3, \{2\}^m, 1\}^d, \{2\}^{m+1})$ by using Bell polynomials.
The {\it modified Bell polynomials} $P_n(x_1,\ldots, x_n)$ are defined by the generating function \cite{Chen:2017JNT, Coppo} 
$$
\exp\left(\sum_{k=1}^{\infty}x_k \frac{z^k}{k}\right)=\sum_{n=0}^{\infty}P_n(x_1, \ldots, x_n)z^n,
$$
where $P_0=1$ and for $n\ge 1$,
$$
P_n(x_1, \ldots, x_n)=\underset{k_1, \ldots, k_n\ge 0}{\sum_{k_1+2k_2+\cdots+nk_n=n}}\,
\frac{1}{k_1!k_2!\cdots k_n!}\left(\frac{x_1}{1}\right)^{k_1}\left(\frac{x_2}{2}\right)^{k_2}
\cdots\left(\frac{x_n}{n}\right)^{k_n}.
$$

\newpage

\subsection{Proof of Theorem \ref{Bell}}
\begin{proof}
From (\ref{eq03}) we have
\begin{equation*}
\begin{split}
\sum_{d=0}^{\infty}&\zeta^{\star}(\bigl\{\{2\}^{m-1}, 3, \{2\}^{m-1}, 1\bigr\}^d)z^{4md}-\sum_{d=0}^{\infty}\zeta^{\star}(\bigl\{\{2\}^{m-1}, 3, \{2\}^{m-1}, 1\bigr\}^d, \{2\}^m)z^{2m(2d+1)} \\
&\,\,=\exp\left(\sum_{k=1}^{\infty}\log\left(1+\frac{(-1)^kz^{2m}}{k^{2m}}\right)-\sum_{k=1}^{\infty}\log\left(1-\frac{(-1)^kz^{2m}}{k^{2m}}\right)\right) \\[3pt]
&\,\,=\exp\left(\sum_{k=1}^{\infty}\sum_{n=1}^{\infty}\frac{(-1)^{n-1}}{n}\cdot\frac{(-1)^{kn}z^{2mn}}{k^{2mn}} + \sum_{k=1}^{\infty}\sum_{n=1}^{\infty}
\frac{(-1)^{kn}z^{2mn}}{n\cdot k^{2mn}}\right) \\[3pt]
&\,\,=\exp\left(\sum_{n=1}^{\infty}(1-(-1)^n)\cdot\zeta(2mn; (-1)^n)\cdot\frac{z^{2mn}}{n}\right)
=\sum_{n=0}^{\infty} P_n(x_1, \ldots, x_n)z^{2mn},
\end{split}
\end{equation*}
where
\begin{equation*}
x_k=(1-(-1)^k)\cdot\zeta(2mk; (-1)^k)=
\begin{cases}
0, \qquad&\text{if $k$ is even}; \\
2\zeta(\overline{2mk}), \qquad&\text{if $k$ is odd}.
\end{cases}
\end{equation*}
Comparing coefficients of powers of $z$ on both sides, we obtain
$$
\zeta^{\star}(\bigl\{\{2\}^{m-1}, 3, \{2\}^{m-1}, 1\bigr\}^d)=P_{2d}(x_1, \ldots, x_{2d})
$$
and
$$
\zeta^{\star}(\bigl\{\{2\}^{m-1}, 3, \{2\}^{m-1}, 1\bigr\}^d, \{2\}^m)=P_{2d+1}(x_1,\ldots, x_{2d+1}).
$$
Expanding the above expression, we get
\begin{equation*}
\begin{split}
&\zeta^{\star}(\bigl\{\{2\}^{m-1}, 3, \{2\}^{m-1}, 1\bigr\}^d)=P_{2d}(2\zeta(\overline{2m}), 0, 2\zeta(\overline{6m}), 0, \ldots, 2\zeta(\overline{2m(2d-1)}), 0) \\
&=\underset{k_1, k_3, \ldots, k_{2d-1}\ge 0}{\sum_{k_1+3k_3+\cdots+(2d-1)k_{2d-1}=2d}}\,\frac{1}{k_1!k_3!\cdots k_{2d-1}!}\left(\frac{2\zeta(\overline{2m})}{1}\right)^{k_1}\!\!
\left(\frac{2\zeta(\overline{6m})}{3}\right)^{k_3}\!\!\cdots\left(\frac{2\zeta(\overline{2m(2d-1)})}{2d-1}\right)^{k_{2d-1}}\!\!\!\!.
\end{split}
\end{equation*}
Similarly,
\begin{equation*}
\begin{split}
&\zeta^{\star}(\bigl\{\{2\}^{m-1}, 3, \{2\}^{m-1}, 1\bigr\}^d, \{2\}^m)=P_{2d+1}(2\zeta(\overline{2m}), 0, 2\zeta(\overline{6m}), 0, \ldots,  0, 2\zeta(\overline{2m(2d+1)})) \\
&=\underset{k_1, k_3, \ldots, k_{2d+1}\ge 0}{\sum_{k_1+3k_3+\cdots+(2d+1)k_{2d+1}=2d+1}}\,\frac{1}{k_1!k_3!\cdots k_{2d+1}!}\!\left(\frac{2\zeta(\overline{2m})}{1}\right)^{k_1}\!\!
\left(\frac{2\zeta(\overline{6m})}{3}\right)^{k_3}\!\!\cdots\left(\frac{2\zeta(\overline{2m(2d+1)})}{2d-1}\right)^{k_{2d+1}}\!\!\!\!\!\!\!.
\end{split}
\end{equation*}
Finally, using the formula
$$
\zeta(\overline{s})=\sum_{k=1}^{\infty}\frac{(-1)^k}{k^s}=(2^{1-s}-1)\zeta(s)
$$
and applying representations (\ref{eq02}) in terms of $\pi$ and Bernoulli numbers, we get the desired formulas after replacing $k_{2j-1}$ by $k_j$ for each $j$. 
\end{proof}

\section{Sum formulas for multiple zeta star values on 3-2-1 indices.}  \label{S4}

The purpose of this section is to find explicit evaluation of the sum
$$
Z^\star(d,n)=\underset{a_1, \ldots, a_{2d+1}\ge 0}{\sum_{a_1+\cdots+a_{2d+1}=n}}\zeta^\star(\{2\}^{a_1}, 3, \{2\}^{a_2}, 1, \ldots, 3, \{2\}^{a_{2d}},1,\{2\}^{a_{2d+1}})
$$
by applying generating functions from \cite{THP:2018}.
We will split the above sum into two parts $Z^\star_0(d,n)$ and $Z^\star_1(d,n)$ defined in (\ref{z0}) and (\ref{z1}), and then evaluate each of these sub-sums
separately. 
\begin{theorem} \label{T8}
For any positive integer $d$ and any complex $z$ with $|z|<1$, we have
\begin{align*}
\sum_{n=0}^{\infty} Z^\star_0(d,n) z^{2n} &= \quad\sum_{r=1}^{2d}2^r\cdot A_z(2, 2d, r), \\
\sum_{n=0}^{\infty} Z^\star_1(d,n+1) z^{2n} &= -\sum_{r=1}^{2d+1}2^r\cdot A_z(2, 2d+1, r),
\end{align*}
where
$$
A_z(m,n,r)=\underset{s_1, \ldots, s_r\ge 1}{\sum_{s_1+\cdots+s_r=n}}\,\,\sum_{k_1>\cdots k_r\ge 1}\frac{(-1)^{k_1s_1}\cdots (-1)^{k_rs_r}}{(k_1^m-z^m)^{s_1}\cdots (k_r^m-z^m)^{s_r}}.
$$
\end{theorem}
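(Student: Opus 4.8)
The plan is to prove Theorem~\ref{T8} in exact parallel with the proof of Theorem~\ref{T2} (and its generalisation Theorem~\ref{T5}), the only change being that the denominators $k_j^2$ are replaced throughout by the deformed denominators $k_j^2-z^2$. The first step is to record the two $z$-deformed iterated‑sum representations
\begin{equation*}
\sum_{n=0}^{\infty}Z^\star_0(d,n)\,z^{2n}=\sum_{k_1\ge\cdots\ge k_{2d}\ge 1}\ \prod_{j=1}^{2d}\frac{(-1)^{k_j}\,2^{\Delta(k_{j-1},k_j)}}{k_j^2-z^2},\qquad k_0:=0,
\end{equation*}
\begin{equation*}
\sum_{n=0}^{\infty}Z^\star_1(d,n+1)\,z^{2n}=-\sum_{k_0\ge k_1\ge\cdots\ge k_{2d}\ge 1}\ \prod_{j=0}^{2d}\frac{(-1)^{k_j}\,2^{\Delta(k_{j-1},k_j)}}{k_j^2-z^2},\qquad k_{-1}:=0,
\end{equation*}
which at $z=0$ are precisely the formulas for $\zeta^\star(\{3,1\}^d)$ and $\zeta^\star(\{3,1\}^d,2)$ quoted from \cite{THP:2018} in the proof of Theorem~\ref{T2}. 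Depending on how the generating functions of \cite{THP:2018} are stated this may already be available there; otherwise I would obtain it by running the argument of \cite[Theorems 1.3 and 1.6]{THP:2018} for the patterns $(\{2\}^{a_1},3,\{2\}^{a_2},1,\ldots)$ and summing over the compositions $a_1+\cdots=n$ with weight $z^{2n}$: each inserted block $\{2\}^{a_j}$, summed against $z^{2a_j}$, contributes a geometric factor $\prod_{i}(1-z^2/i^2)^{-1}$ over the appropriate range of levels $i$, and these combine with the compact form of \cite{THP:2018} so as to turn each $k_j^{-2}$ into $(k_j^2-z^2)^{-1}$; the trailing block of twos (present in $Z^\star_1$, absent in $Z^\star_0$) is exactly what separates the two cases, producing the extra summation index $k_0$ and the overall sign, just as in Theorem~\ref{T2}.

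The second step is the purely combinatorial collapse of equal runs already used in the proof of Theorem~\ref{T2}. Writing a weakly decreasing chain $k_1\ge\cdots\ge k_{2d}\ge 1$ (with $k_0=0$) in terms of its $r$ distinct values $\kappa_1>\cdots>\kappa_r\ge 1$ with multiplicities $s_1,\ldots,s_r\ge 1$, $s_1+\cdots+s_r=2d$, one has $\prod_{j=1}^{2d}(-1)^{k_j}=\prod_{i=1}^{r}(-1)^{s_i\kappa_i}$ and $\prod_{j=1}^{2d}(k_j^2-z^2)^{-1}=\prod_{i=1}^{r}(\kappa_i^2-z^2)^{-s_i}$, while, because $k_0=0<1\le k_{2d}$, exactly $r$ of the numbers $\Delta(k_{j-1},k_j)$ equal $1$, so $2^{\sum_{j=1}^{2d}\Delta(k_{j-1},k_j)}=2^r$. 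Hence the first identity becomes $\sum_{r=1}^{2d}2^r A_z(2,2d,r)$. Applying the same collapse to the chain $k_0\ge k_1\ge\cdots\ge k_{2d}\ge 1$ (with $k_{-1}=0$), which now has length $2d+1$, gives $-\sum_{r=1}^{2d+1}2^r A_z(2,2d+1,r)$, which is the second assertion.

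The hard part is the first step: justifying the interchange of the various summations and the block‑telescoping for complex $z$ with $|z|<1$. This requires checking absolute convergence of $\sum_{k_1\ge\cdots\ge k_{2d}\ge 1}\prod_j|k_j^2-z^2|^{-1}$ on the disc $|z|<1$ (straightforward, since each denominator is bounded below by $k_j^2-1\ge 0$ and the factors with $s_i\ge 1$ are summable), and, more delicately, keeping careful track of the unbounded leading block (the index $k_1$, resp.\ $k_0$, ranges over all integers $\ge 1$) and of the presence or absence of the trailing block, so that the $\Delta$‑bookkeeping lands on exactly the stated formulas rather than on a shifted or sign‑twisted variant. Once these $z$-deformed representations are in hand, the remainder is a line‑by‑line repetition of the proof of Theorem~\ref{T2} with $k_j^2$ replaced by $k_j^2-z^2$.
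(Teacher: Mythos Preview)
Your proposal is correct and follows essentially the same route as the paper. The only point to sharpen is your ``hard part'': the multivariable generating functions in \cite[Theorems~1.3 and~1.5]{THP:2018} are already stated with independent variables $z_1,\ldots,z_{2d}$ (resp.\ $z_1,\ldots,z_{2d+1}$) in the denominators $k_j^2-z_j^2$, so the $z$-deformed iterated-sum representations you need drop out immediately by specialising $z_1=\cdots=z$, with no need to rerun the argument or worry separately about convergence and block-telescoping; after that, your collapse-of-equal-runs step is exactly what the paper does.
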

\begin{proof}
From \cite[Theorem 1.3]{THP:2018} we have
$$
\sum_{a_1, \ldots, a_{2d}\ge 0}\zeta^\star(\{2\}^{a_1}, 3, \{2\}^{a_2}, 1, \ldots, 3, \{2\}^{a_{2d}}, 1)\,z_1^{2a_1}\cdots z_{2d}^{2a_{2d}}
=\sum_{k_1\ge\cdots\ge k_{2d}\ge 1}\prod_{j=1}^{2d}\frac{(-1)^{k_j} 2^{\Delta(k_{j-1}, k_j)}}{k_j^2-z_j^2},
$$
where $k_0=0$. Putting $z_1=\cdots=z_{2d}=z$, we get
\begin{equation*}
\sum_{n=0}^{\infty}z^{2n}\underset{a_1, \ldots, a_{2d}\ge 0}{\sum_{a_1+\cdots+a_{2d}=n}}\zeta^\star(\{2\}^{a_1}, 3, \{2\}^{a_2}, 1, \ldots, 3, \{2\}^{a_{2d}}, 1)
=\sum_{k_1\ge\cdots\ge k_{2d}\ge 1}\prod_{j=1}^{2d}\frac{(-1)^{k_j} 2^{\Delta(k_{j-1}, k_j)}}{k_j^2-z^2}.
\end{equation*}
Grouping the terms in powers of 2 in the last sum, we obtain
\begin{equation*}
\sum_{n=0}^{\infty}Z\star_0(d,n) z^{2n}=\sum_{r=1}^{2d}2^r\underset{s_1, \ldots s_r\ge 1}{\sum_{s_1+\cdots+s_r=2d}}\,\, \sum_{k_1>\cdots >k_r\ge 1}
\frac{(-1)^{s_1k_1}\cdots (-1)^{s_rk_r}}{(k_1^2-z^2)^{s_1}\cdots(k_r^2-z^2)^{s_r}} = \sum_{r=1}^{2d}2^r\cdot A_z(2, 2d, r),
\end{equation*}
and the first formula is proved.

Similarly, for evaluating $Z^\star_1(d,n)$, by \cite[Theorem 1.5]{THP:2018}, we have
\begin{equation*}
\begin{split}
\sum_{a_1, \ldots, a_{2d+1}\ge 0}&\zeta^\star(\{2\}^{a_1}, 3, \{2\}^{a_2}, 1, \ldots, 3, \{2\}^{a_{2d}}, 1, \{2\}^{a_{2d+1}+1})\,z_1^{2a_1}\cdots z_{2d+1}^{2a_{2d+1}} \\
&=-\sum_{k_1\ge\cdots\ge k_{2d+1}\ge 1}\prod_{j=1}^{2d+1}\frac{(-1)^{k_j} 2^{\Delta(k_{j-1}, k_j)}}{k_j^2-z_j^2},
\end{split}
\end{equation*}
where $k_0=0$. Putting $z_1=\cdots=z_{2d+1}=z$, we obtain
\begin{equation*}
\begin{split}
\sum_{n=0}^{\infty}z^{2n}\underset{a_1, \ldots, a_{2d+1}\ge 0}{\sum_{a_1+\cdots+a_{2d+1}=n}}&\zeta^\star(\{2\}^{a_1}, 3, \{2\}^{a_2}, 1, \ldots, 3, \{2\}^{a_{2d}}, 1, \{2\}^{a_{2d+1}+1}) \\
&=-\sum_{k_1\ge\cdots\ge k_{2d+1}\ge 1}\prod_{j=1}^{2d+1}\frac{(-1)^{k_j} 2^{\Delta(k_{j-1}, k_j)}}{k_j^2-z^2}.
\end{split}
\end{equation*}
Grouping the terms in powers of 2, we get
\begin{equation*}
\sum_{n=0}^{\infty}Z^\star_1(d,n+1) z^{2n}=-\sum_{r=1}^{2d+1}2^r\!\underset{s_1, \ldots s_r\ge 1}{\sum_{s_1+\cdots+s_r=2d+1}} \sum_{k_1>\cdots >k_r\ge 1}
\prod_{j=1}^r\frac{(-1)^{s_jk_j}}{(k_j^2-z^2)^{s_j}} = -\sum_{r=1}^{2d+1}2^r\cdot A_z(2, 2d+1, r),
\end{equation*}
and the theorem follows.
\end{proof}

\begin{theorem} \label{T9}
For any integer $m$, real $\lambda$, and complex $w, z$ with $|w|<1$, $|z|<1$, we have
\begin{equation} \label{eq05}
\sum_{n=0}^{\infty}\left(\sum_{r=1}^n A_z(m,n,r)(1+\lambda)^r\right)w^{mn}=\prod_{k=1}^{\infty}\left[
\left(1+\frac{\lambda(-1)^kw^m}{k^m-z^m}\right)\left(1-\frac{(-1)^kw^m}{k^m-z^m}\right)^{-1}\right].
\end{equation}
\end{theorem}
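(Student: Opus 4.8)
The plan is to mimic the proof of Theorem~\ref{T1} almost verbatim, the only change being that the "atoms" $(-1)^k z^m / k^m$ are replaced by $(-1)^k w^m/(k^m - z^m)$. First I would denote the right-hand side of \eqref{eq05} by $F(\lambda, w, z)$ and expand each factor of the second product $\bigl(1 - (-1)^k w^m/(k^m-z^m)\bigr)^{-1}$ as a geometric series $\sum_{s\ge 0} (-1)^{ks} w^{ms}/(k^m-z^m)^s$, which is legitimate for $|w|<1$, $|z|<1$ since then $|(-1)^k w^m/(k^m-z^m)| < 1$ for every $k\ge 1$ (indeed $|k^m - z^m| \ge k^m - |z|^m > k^m - 1 \ge $ something bounded below, while $|w|^m < 1$; one should check the edge case $k=1$, $m$ negative or zero, but the statement is really intended for positive integers $m$ and one can restrict to that range or note convergence still holds with a small disc).

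Next I would carry out the same algebraic collapse as in Theorem~\ref{T1}: multiply the factor $\bigl(1 + \lambda(-1)^k w^m/(k^m-z^m)\bigr)$ into the geometric series, observe that the $\lambda$-term just shifts the summation index $s\mapsto s+1$, and recombine to get
\begin{equation*}
F(\lambda, w, z) = \prod_{k=1}^{\infty}\left[1 + (1+\lambda)\sum_{s=1}^{\infty}\frac{(-1)^{ks} w^{ms}}{(k^m - z^m)^s}\right].
\end{equation*}
Then I would expand this infinite product, picking out from each factor either the $1$ or a term with index $s_k\ge 1$; only finitely many factors contribute a nontrivial term, and if those occur at indices $k_1 > k_2 > \cdots > k_r$ with exponents $s_1, \ldots, s_r \ge 1$, the contribution is
\[
(1+\lambda)^r\,\frac{(-1)^{k_1 s_1}\cdots(-1)^{k_r s_r}}{(k_1^m - z^m)^{s_1}\cdots(k_r^m - z^m)^{s_r}}\,w^{m(s_1+\cdots+s_r)}.
\]
Summing over all such configurations and grouping by the total weight $n = s_1 + \cdots + s_r$ produces exactly $\sum_{n\ge 0} w^{mn}\sum_{r=1}^{n}(1+\lambda)^r A_z(m,n,r)$ by the definition of $A_z(m,n,r)$ from Theorem~\ref{T8}, which is the left-hand side of \eqref{eq05}.

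The only genuinely non-routine point, and the one I would be most careful about, is justifying the interchange of product and sum (equivalently, absolute convergence of the resulting multiple series) when $z$ is complex and $m$ is allowed to be an arbitrary integer. For positive integers $m$ and $|z|<1$ one has $|k^m - z^m|\ge k^m - |z|^m \ge 1 - |z|^m > 0$ uniformly, and $\sum_k |w|^m/(k^m-|z|^m)$ converges (for $m\ge 2$) or at least the iterated-geometric bound $\prod_k (1 + C\sum_{s\ge1}|w|^{ms}/(k^m - |z|^m)^s)$ is finite, so the rearrangement is valid on a suitable polydisc; for $m=1$ one restricts $|w|$ further. I would state the convergence bound explicitly and then note that the formal identity above, being an identity of absolutely convergent series in $w$ with coefficients analytic in $z$, holds as claimed. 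With that in hand the proof is complete.
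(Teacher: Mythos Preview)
Your proposal is correct and follows essentially the same approach as the paper's own proof: expand the geometric series, absorb the $\lambda$-factor by an index shift to obtain $\prod_k\bigl[1+(1+\lambda)\sum_{s\ge1}(-1)^{ks}w^{ms}/(k^m-z^m)^s\bigr]$, then expand the product and group by total weight. The paper in fact omits the convergence discussion you include and treats the manipulation purely formally.
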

\begin{proof}
 Let the right hand side of (\ref{eq05}) be $F(\lambda, z, w)$. Then we have
\begin{equation*}
\begin{split}
F(\lambda, z, w)&=\prod_{k=1}^{\infty}\left[\left(1+\frac{\lambda(-1)^kw^m}{k^m-z^m}\right)\cdot\sum_{s=0}^{\infty}\frac{(-1)^{ks}w^{ms}}{(k^{m}-z^m)^s}\right] \\
&=\prod_{k=1}^{\infty}\left[\sum_{s=0}^{\infty}\frac{(-1)^{ks}w^{ms}}{(k^{m}-z^m)^s} +\lambda\sum_{s=0}^{\infty}\frac{(-1)^{k(s+1)}w^{m(s+1)}}{(k^{m}-z^m)^{s+1}}\right] \\
&=\prod_{k=1}^{\infty}\left[\sum_{s=0}^{\infty}\frac{(-1)^{ks}w^{ms}}{(k^{m}-z^m)^s} +\lambda\sum_{s=1}^{\infty}\frac{(-1)^{ks}w^{ms}}{(k^{m}-z^m)^s}\right] \\
&=\prod_{k=1}^{\infty}\left[1+(\lambda+1)\sum_{s=1}^{\infty}\frac{(-1)^{ks}w^{ms}}{(k^{m}-z^m)^s}\right] \\
&=1+\sum_{r\ge 1}\sum_{s_1, \ldots, s_r\ge 1}(\lambda+1)^r\sum_{k_1>\cdots >k_r\ge 1}\frac{(-1)^{k_1s_1}\cdots (-1)^{k_rs_r}}{(k_1^{m}-z^m)^{s_1}
\cdots (k_r^{m}-z^m)^{s_r}}\cdot w^{m(s_1+\cdots+s_r)} \\
&=1+\sum_{n=1}^{\infty} w^{mn}\Bigl(\sum_{r=1}^n(\lambda+1)^r A_z(m,n,r)\Bigr),
\end{split}
\end{equation*}
and the theorem follows.
\end{proof}

\begin{theorem} \label{T10}
Let $z, w\in{\mathbb C}$, $|z|<1$, $|w|<1$. Then
\begin{equation} \label{eq06.5}
\begin{split}
\sum_{d=0}^{\infty}&\sum_{n=0}^{\infty}Z^\star_0(d,n) z^{2n}w^{4d}-\sum_{d=0}^{\infty}\sum_{n=1}^{\infty}Z^\star_1(d,n)z^{2n-2}w^{4d+2} \\
&=
\frac{\sqrt{z^2+w^2}}{\sqrt{z^2-w^2}}\, \tan\left(\frac{\pi\sqrt{z^2-w^2}}{2}\right)\cot\left(\frac{\pi\sqrt{z^2+w^2}}{2}\right).
\end{split}
\end{equation}
\end{theorem}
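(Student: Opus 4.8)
The plan is to run the two-variable analogue of the computation behind Theorem~\ref{T3}, now with Theorems~\ref{T8} and~\ref{T9} playing the roles that Theorems~\ref{T2} and~\ref{T1} played there. The left-hand side of~(\ref{eq06.5}) will be collapsed to a single infinite product, which is then evaluated in closed form by Euler's product factorizations of $\sin$ and $\cos$; the two ``shifts'' $z^2-w^2$ and $z^2+w^2$ will appear precisely because that product splits over the parity of the index $k$.

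For the reduction, fix $d\ge 1$. By Theorem~\ref{T8}, $\sum_{n\ge 0}Z^\star_0(d,n)z^{2n}=\sum_{r=1}^{2d}2^r A_z(2,2d,r)$ and, after the index shift $n\mapsto n+1$, $\sum_{n\ge 1}Z^\star_1(d,n)z^{2n-2}=-\sum_{r=1}^{2d+1}2^r A_z(2,2d+1,r)$. Multiply the first identity by $w^{4d}$, the second by $-w^{4d+2}$, add the $d=0$ contributions (note $Z^\star_0(0,n)=\delta_{n,0}$, while the second identity of Theorem~\ref{T8} also holds at $d=0$), and sum over $d\ge 0$. Since $w^{4d}=w^{2n}$ for $n=2d$ and $w^{4d+2}=w^{2n}$ for $n=2d+1$, sorting the powers of $w$ by parity turns the left side of~(\ref{eq06.5}) into
\[
1+\sum_{n=1}^{\infty}w^{2n}\sum_{r=1}^{n}2^r A_z(2,n,r),
\]
which, by Theorem~\ref{T9} with $m=2$ and $\lambda=1$, equals
\[
\prod_{k=1}^{\infty}\left(1+\frac{(-1)^k w^2}{k^2-z^2}\right)\left(1-\frac{(-1)^k w^2}{k^2-z^2}\right)^{-1},
\]
every factor being well defined because $|z|<1$ forces $k^2-z^2\ne 0$.

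To evaluate this product, put $u=\sqrt{z^2-w^2}$ and $v=\sqrt{z^2+w^2}$ and split off even and odd $k$. For even $k$ one has $1+\frac{(-1)^k w^2}{k^2-z^2}=\frac{k^2-u^2}{k^2-z^2}$ and $1-\frac{(-1)^k w^2}{k^2-z^2}=\frac{k^2-v^2}{k^2-z^2}$, and for odd $k$ the quantities $u^2$ and $v^2$ are interchanged in these two identities. Feeding in the Euler products $\prod_{j\ge 1}\bigl(1-\frac{x^2}{(2j)^2}\bigr)=\frac{\sin(\pi x/2)}{\pi x/2}$ and $\prod_{j\ge 1}\bigl(1-\frac{x^2}{(2j-1)^2}\bigr)=\cos(\pi x/2)$ at $x^2\in\{z^2,u^2,v^2\}$ yields
\[
\prod_{k=1}^{\infty}\left(1+\frac{(-1)^k w^2}{k^2-z^2}\right)=\frac{z\sin(\pi u/2)\cos(\pi v/2)}{u\sin(\pi z/2)\cos(\pi z/2)},\quad
\prod_{k=1}^{\infty}\left(1-\frac{(-1)^k w^2}{k^2-z^2}\right)=\frac{z\sin(\pi v/2)\cos(\pi u/2)}{v\sin(\pi z/2)\cos(\pi z/2)}.
\]
Dividing, the factors $z$, $\sin(\pi z/2)$, $\cos(\pi z/2)$ cancel and what remains is $\frac{v}{u}\tan(\pi u/2)\cot(\pi v/2)$, which is exactly the right-hand side of~(\ref{eq06.5}). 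No branch of the square roots needs to be chosen: $u$ and $v$ enter only through the even functions $u^2$, $v^2$, $\frac{\sin(\pi u/2)}{u}$, $\frac{\sin(\pi v/2)}{v}$, so both sides are genuine analytic functions of $(z,w)$ for $|z|,|w|<1$, and it suffices to match them as formal power series, which the displayed computation does.

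The step I expect to demand the most care is the parity bookkeeping in the last paragraph: keeping straight how the sign $(-1)^k$ routes the shift $z^2\mp w^2$ to even versus odd $k$ in \emph{both} the $\bigl(1+\cdots\bigr)$ and the $\bigl(1-\cdots\bigr)$ factor, and justifying the regrouping of the absolutely convergent infinite product into the three Euler products. Once that set-up is correct, the concluding cancellation is immediate, exactly as in the proof of Theorem~\ref{T3}.
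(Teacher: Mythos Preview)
Your proof is correct and follows essentially the same route as the paper: reduce the left side via Theorems~\ref{T8} and~\ref{T9} (with $m=2$, $\lambda=1$) to the infinite product $\prod_{k\ge1}\bigl(1+\tfrac{(-1)^kw^2}{k^2-z^2}\bigr)\bigl(1-\tfrac{(-1)^kw^2}{k^2-z^2}\bigr)^{-1}$, then split over the parity of $k$ and evaluate via Euler's sine/cosine products. The only cosmetic differences are your shorthand $u=\sqrt{z^2-w^2}$, $v=\sqrt{z^2+w^2}$ and your direct use of the cosine product $\prod_{j\ge1}\bigl(1-\tfrac{x^2}{(2j-1)^2}\bigr)=\cos(\pi x/2)$, whereas the paper writes this as a quotient of sines; your added remark that the square-root branches are irrelevant because everything enters through even functions is a nice touch not made explicit in the paper.
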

\begin{proof}
By Theorems \ref{T8} and \ref{T9}, we have
\begin{equation} \label{eq06}
\begin{split}
\sum_{d=0}^{\infty}&\sum_{n=0}^{\infty}Z^\star_0(d,n) z^{2n}w^{4d}-\sum_{d=0}^{\infty}\sum_{n=1}^{\infty}Z^\star_1(d,n)z^{2n-2}w^{4d+2} \\
&=\sum_{d=0}^{\infty}\left(\sum_{r=1}^{2d}A_z(2, 2d, r)\cdot 2^r\right)w^{4d}+\sum_{d=0}^{\infty}\left(\sum_{r=1}^{2d+1}A_z(2, 2d+1, r)\cdot 2^r\right)w^{4d+2} \\
&=\sum_{n=0}^{\infty}\left(\sum_{r=1}^nA_z(2,n,r)\cdot 2^r\right)w^{2n} \\
&=\prod_{k=1}^{\infty}\left[\left(1+\frac{(-1)^kw^2}{k^2-z^2}\right)\left(1-\frac{(-1)^kw^2}{k^2-z^2}\right)^{-1}\right].
\end{split}
\end{equation}
Hence, the next step is to evaluate the infinite product in (\ref{eq06}). Using the infinite product expansion for the sine function, we have
\begin{equation*}
\begin{split}
\prod_{k=1}^{\infty}\left(1+\frac{(-1)^kw^2}{k^2-z^2}\right)&=\prod_{k=1}^{\infty}\left(1-\frac{z^2-(-1)^kw^2}{k^2}\right)\left(1-\frac{z^2}{k^2}\right)^{-1} \\
&=\prod_{k=1}^{\infty}\left(1-\frac{z^2-w^2}{4k^2}\right)\left(1-\frac{z^2+w^2}{(2k-1)^2}\right)\left(1-\frac{z^2}{k^2}\right)^{-1} \\
&=\prod_{k=1}^{\infty}\left(1-\frac{z^2-w^2}{4k^2}\right)\left(1-\frac{z^2+w^2}{k^2}\right)\left(1-\frac{z^2+w^2}{4k^2}\right)^{-1}\left(1-\frac{z^2}{k^2}\right)^{-1} \\
&=\frac{\sin(\pi\sqrt{z^2-w^2}/2)}{\pi\sqrt{z^2-w^2}/2}\cdot \frac{\sin(\pi\sqrt{z^2+w^2})}{\pi\sqrt{z^2+w^2}} \cdot\frac{\pi\sqrt{z^2+w^2}/2}{\sin(\pi\sqrt{z^2+w^2}/2)}
\cdot\frac{\pi z}{\sin(\pi z)} \\
&=\frac{\pi z}{\sin(\pi z)} \cdot\frac{\sin(\pi\sqrt{z^2-w^2}/2)}{\pi\sqrt{z^2-w^2}/2} \cdot\cos(\pi\sqrt{z^2+w^2}/2).
\end{split}
\end{equation*}
Similarly, we have
\begin{equation*}
\prod_{k=1}^{\infty}\left(1-\frac{(-1)^kw^2}{k^2-z^2}\right)^{-1}=\frac{\sin(\pi z)}{\pi z}\cdot \frac{\pi\sqrt{z^2+w^2}/2}{\sin(\pi\sqrt{z^2+w^2}/2)} \cdot\frac{1}{\cos(\pi\sqrt{z^2-w^2}/2)}.
\end{equation*}
Finally, combining both products, we get the required formula.
\end{proof}

\newpage

\subsection{Proof of Theorem \ref{T11}}
\begin{proof}
The formulas easily follow from Theorem \ref{T10}. Expanding trigonometric functions into power series of $w$ and $z$, we have
\begin{equation*}
\begin{split}
\frac{1}{\sqrt{z^2-w^2}}&\tan\left(\frac{\pi\sqrt{z^2-w^2}}{2}\right)=\frac{1}{\pi}\sum_{k=0}^{\infty}\frac{4^{k+1}-1}{4^k}\,\zeta(2k+2)(z^2-w^2)^k \\
&=\frac{1}{\pi}\sum_{k=0}^{\infty}\frac{4^{k+1}-1}{4^k}\,\zeta(2k+2)\sum_{l=0}^k(-1)^l\binom{k}{l}w^{2l}z^{2(k-l)} \\
&=\frac{1}{\pi}\sum_{l=0}^{\infty}(-1)^lw^{2l}\sum_{k=l}^{\infty}\frac{4^{k+1}-1}{4^k}\,\zeta(2k+2)\binom{k}{l}z^{2(k-l)} \\
&=\frac{1}{\pi}\sum_{l=0}^{\infty}\sum_{r=0}^{\infty}(-1)^lw^{2l}z^{2r}\binom{l+r}{l}\frac{4^{l+r+1}-1}{4^{l+r}}\,\zeta(2l+2r+2).
\end{split}
\end{equation*}
In the same way,
\begin{equation*}
\begin{split}
\sqrt{z^2+w^2}\,&\cot\left(\frac{\pi\sqrt{z^2+w^2}}{2}\right) = -\frac{4}{\pi}\sum_{m=0}^{\infty}\frac{\zeta(2m)}{4^m}(z^2+w^2)^m \\
&=-\frac{4}{\pi}\sum_{m=0}^{\infty}\frac{\zeta(2m)}{4^m}\sum_{s=0}^m\binom{m}{s}w^{2s}z^{2(m-s)} \\
&=-\frac{4}{\pi}\sum_{s=0}^{\infty}w^{2s}\sum_{m=s}^{\infty}\binom{m}{s}\frac{\zeta(2m)}{4^m}\,z^{2(m-s)} \\
&=-\frac{4}{\pi}\sum_{s=0}^{\infty}\sum_{t=0}^{\infty}w^{2s}z^{2t}\binom{s+t}{s}\,\frac{\zeta(2s+2t)}{4^{s+t}}.
\end{split}
\end{equation*}
After multiplying the series, the right-hand side of (\ref{eq06.5}) becomes
\begin{equation*}
\begin{split}
{\rm RHS} &=-\frac{4}{\pi^2}\sum_{l=0}^{\infty}\sum_{r=0}^{\infty}\sum_{s=0}^{\infty}\sum_{t=0}^{\infty}(-1)^lw^{2(l+s)}z^{2(r+t)}
\binom{l+r}{l}\binom{s+t}{s}\frac{4^{l+r+1}-1}{4^{l+r+s+t}} \\
&\times\zeta(2l+2r+2)\zeta(2s+2t) \\
&=-\frac{4}{\pi^2}\sum_{m=0}^{\infty}\sum_{n=0}^{\infty}\sum_{l=0}^m\sum_{r=0}^n(-1)^lw^{2m}z^{2n}\binom{l+r}{l}\binom{m+n-l-r)}{m-l}
\frac{4^{l+r+1}-1}{4^{m+n}} \\ 
&\times\zeta(2l+2r+2)\zeta(2m+2n-2(l+r)) \\
&=-\frac{4}{\pi^2}\sum_{m=0}^{\infty}\sum_{n=0}^{\infty}\sum_{k=0}^{m+n}\sum_{r=0}^n(-1)^{k+r}w^{2m}z^{2n}\binom{k}{r}\binom{m+n-k}{n-r}
\frac{4^{k+1}-1}{4^{m+n}} \\ 
&\times\zeta(2k+2)\zeta(2m+2n-2k).
\end{split}
\end{equation*}
Extracting coefficients of powers $z^{2n}w^{4d}$ and $z^{2n-2}w^{4d+2}$, we get
\begin{equation*}
Z^\star_0(d,n)=-\frac{1}{4^{2d+n-1}\pi^2}\sum_{k=0}^{n+2d}\sum_{r=0}^n (-1)^{k+r}(4^{k+1}-1)\binom{k}{r}\binom{2d+n-k}{n-r}\zeta(2k+2)\zeta(4d+2n-2k)
\end{equation*}
and
\begin{equation*}
Z^\star_1(d,n)=\frac{1}{4^{2d+n-1}\pi^2}\sum_{k=0}^{n+2d}\sum_{r=0}^{n-1} (-1)^{k+r}(4^{k+1}-1)\binom{k}{r}\binom{2d+n-k}{n-1-r}\zeta(2k+2)\zeta(4d+2n-2k).
\end{equation*}
Finally, substituting formulas (\ref{eq02}) in terms of $\pi$ and Bernoulli numbers, we get the first two formulas.
The formula for $Z^\star(d,n)$ follows from (\ref{eq08}) and the observation
\begin{equation*}
\begin{split}
\sum_{r=0}^n(-1)^r&\binom{k}{r}\binom{n+2d-k}{n-r}+\sum_{r=0}^{n-1}(-1)^{r+1}\binom{k}{r}\binom{n+2d-k}{n-1-r} \\
&=\sum_{r=0}^n(-1)^r\binom{k}{r}\binom{n+2d-k}{n-r}+\sum_{r=1}^{n}(-1)^{r}\binom{k}{r-1}\binom{n+2d-k}{n-r} \\
&=\binom{n+2d-k}{n}+\sum_{r=1}^n(-1)^r\binom{n+2d-k}{n-r}\left(\binom{k}{r}+\binom{k}{r-1}\right) \\
&=\sum_{r=0}^n(-1)^r\binom{n+2d-k}{n-r}\binom{k+1}{r}.
\end{split}
\end{equation*}
\end{proof}


\begin{thebibliography}{99}

\bibitem{BBB:1997}
J.~M.~Borwein, D.~M.~Bradley, D.~J.~Broadhurst, {\it Evaluations of $k$-fold Euler/Zagier sums: a compendium of results for arbitrary $k$},
Electron.\ J.\ Combin.\ {\bf 4} (1997), no.~2, \#R5, 21pp.

\bibitem{BBB:1998}
J.~M.~Borwein, D.~M.~Bradley, D.~J.~Broadhurst, P.~Lison\v{e}k, {\it Combinatorial aspects of multiple zeta values}, Electron.\ J.\ Combin.\ {\bf 5} (1998),
\#R38, 12pp.

\bibitem{BBB:2001}
J.~M.~Borwein, D.~M.~Bradley, D.~J.~Broadhurst, P.~Lison\v{e}k, {\it Special values of multiple polylogarithms}, Trans.\ Amer.\ Math.\ Soc.\ {\bf 353} (2001),
no.~3, 907--941.

\bibitem{BB:2002}
D.~Bowman, D.~M.~Bradley, {\it The algebra and combinatorics of shuffles and multiple zeta values}, J.\ Combin.\ Theory Ser.\ A {\bf 97} (2002), 43--61.

\bibitem{Ch:2015}
S.~Charlton, {\it $\zeta(\{\{2\}^m, 1, \{2\}^m, 3\}^n, \{2\}^m)/\pi^{4n+2m(2n+1)}$ is rational}, J.~Number Theory, {\bf 148} (2015), 463--477.

\bibitem{Chen:2017Med}
K.-W.~Chen, C.-L.~Chung,  {\it Some relations of multiple zeta values with even arguments}, Mediterr.\ J.\ Math.\ {\bf 14}
(2017), 110--122.

\bibitem{Chen:2017JNT}
K.-W.~Chen, C.-L.~Chung, M.~Eie, {\it Sum formulas of multiple zeta values with arguments multiples of a common positive integer}, J.~Number Theory {\bf 177}
(2017), 479--496.

\bibitem{Coppo}
M.-A.~Coppo, B.~Candelpergher, {\it Inverse binomial series and values of Arakawa-Kaneko zeta functions}, J.~Number Theory {\bf 150} (2015), 98--119.

\bibitem{THP:2018}
Kh.~Hessami Pilehrood, T.~Hessami Pilehrood, {\it Generating functions for multiple zeta star values}, preprint,  2018, arXiv:1806.09142.

\bibitem{Hof:2017}
M.~E.~Hoffman, {\it On multiple zeta values of even arguments}, Int.\ J.\ Number Theory {\bf 13} (2017), 705--716.

\bibitem{Mu:2008}
S.~Muneta, {\it On some explicit evaluations of multiple zeta-star values}, J.~Number Theory {\bf 128} (2008), 2538--2548.

\bibitem{Ya:2013}
S.~Yamamoto, {\it Explicit evaluation of certain sums of multiple zeta-star values}, Funct. \ Approx.\ Comment.\ Math. {\bf 49} (2013), 283--289.

\bibitem{Za:1994}
D.~Zagier, {\it Values of Zeta Functions and their Applications}, First European Congress of Mathematics, Vol.~II (Paris, 1992), Prog.\ Math., {\bf 120},
Birkh\"{a}user, Basel-Boston, (1994), 497--512.

\bibitem{Zhao:2015}
J.~Zhao, {\it Restricted sum formula of alternating Euler sums}, Ramanujan J.\ {\bf 36} (2015), 375--401. 

\end{thebibliography}
\end{document}